\newcounter{serreinv}
\mathchardef\period=\mathcode`.
\DeclareMathSymbol{.}{\mathord}{letters}{"3B}
\newtheorem*{rep@theorem}{\rep@title}
\newcommand{\newreptheorem}[2]{%
\newenvironment{rep#1}[1]{%
 \def\rep@title{#2 \ref{##1}}%
 \begin{rep@theorem}}%
 {\end{rep@theorem}}}
\tikzset{
  symbol/.style={
    draw=none,
    every to/.append style={
      edge node={node [sloped, allow upside down, auto=false]{$#1$}}}
  }
}
\newtheorem{lemma}{Lemma}[section]
\newtheorem{theorem}[lemma]{Theorem}
\newtheorem{corollary}[lemma]{Corollary}
\newtheorem{prop}[lemma]{Proposition}
\theoremstyle{definition}
\newtheorem{defn}[lemma]{Definition}
\newtheorem{rem}[lemma]{Remark}
\theoremstyle{remark}
\newtheorem*{rem*}{Remark}
\newtheorem*{note*}{Note}
\newcommand\restr[2]{{% we make the whole thing an ordinary symbol
  \left.\kern-\nulldelimiterspace % automatically resize the bar with \right
  #1 % the function
  \vphantom{\big|} % pretend it's a little taller at normal size
  \right|_{#2} % this is the delimiter
  }}
  \DeclareMathSymbol{.}{\mathord}{letters}{"3B}
\def\temp{&} \catcode`&=\active \let&=\temp
\begin{document}
\title{Stability conditions on non-commutative curves}
\author{Benjamin Sung}
\address{Department of Mathematics, University of California, Santa Barbara, CA 93106, USA}
\email{bsung@ucsb.edu}
\date{}
\begin{abstract}
%{\color{red} TODO: write introduction, verify Lemma 2.0.2 for the last time, verify computation of 3.4 for the last time, write section 6.5}
We prove that any non-commutative smooth projective variety with a Bridgeland stability condition of dimension less than $\frac{6}{5}$ must be a smooth projective curve. As a consequence, we deduce the non-existence of such categories with dimension in the interval $(1,\frac{6}{5})$. Moreover, we prove a sharp reconstruction result for smooth projective curves of higher genus using the moduli space of stable objects in a category of dimension $1$, and deduce a structural result for their semi-orthogonal decompositions.
%{\color{red} write abstract, go over introduction, think through section 3 one last time}
\end{abstract}
\maketitle
\tableofcontents
\section{Introduction}
The study of D-branes in string theory has undergone a number of refinements, requiring the introduction of increasingly mathematical tools. They were first found to admit an interpretation in topological K-theory in \cite{1998}, and then as an object of the derived category in \cite{kontsevich1994homological} together with a BPS condition refined as $\Pi$-stability in \cite{2001} in the topological sector. Since its inception as a precise mathematical structure in \cite{2002math.....12237B}, the theory of Bridgeland stability conditions has occupied a unique place in mathematical physics; it has become a foundational tool in algebraic geometry for the construction and study of moduli spaces of sheaves and their birational geometry together with a wealth of applications to classical problems. For a survey of D-branes and $\Pi$-stability in the derived category, we refer the reader to the standard surveys~\cite{1999,2005} and for applications of Bridgeland stability to algebraic geometry, we refer to the surveys~\cite{macri2019lecturesbs,macri2019lectures,bayer2022unreasonable}.

A somewhat less studied line of inquiry is the investigation of general properties enjoyed by triangulated categories which admit a Bridgeland stability condition. Results in this direction would complement a growing, pervasive theme in algebraic geometry, namely, that many higher dimensional geometric phenomena are often governed by lower dimensional non-commutative smooth projective varieties together with their moduli spaces of Bridgeland stable objects. 
%\begin{question}\label{q:motivation}
%\begin{enumerate}[(a)]
%\item
%Are there sharp criteria delineating the space of triangulated categories admitting a Bridgeland stability condition from those that provably do not?
%\item
%Does the existence of a Bridgeland stability condition yield a selection mechanism for identifying geometric categories\footnote{i.e. categories that are deformation equivalent to $D^b(X)$ with $X$ a smooth, projective variety}?
%\end{enumerate}
%\end{question}
%{\color{red}application to structure of semiorthogonal decompositions and minimal model}
This leads us to the main subject of this article: Can we classify non-commutative smooth projective varieties and to what extent does the existence of a stability condition constrain their structure?

%This article is dedicated to addressing Question~\ref{q:motivation}. 
Given an arbitrary triangulated category equipped with a stability condition, either a systematic classification or even the deduction of general properties is intractable without further assumptions. %{\color{red}dimensions of categories, yu qiu \cite{qiu2018global}, elagin-lunts}
As an example of the technical complexity involved, there are many possible definitions for even the dimension of a triangulated category, as pointed out in~\cite{2019arXiv190109461E}. Thus, to facilitate our analysis, we will focus on the notion of a global dimension of a stability condition~\cite{qiu2018global}, which roughly bounds the possible difference between phases of semistable objects related by a nonzero morphism.

A first step was undertaken in \cite{Kikuta_2021}, which classified triangulated categories admitting a stability condition $\sigma$ of global dimension $gldim(\sigma) < 1$ as $D^b(Q)$, with $Q$ a Dynkin quiver, and also characterized the global dimension of stability conditions on $D^b(C)$, with $C$ a smooth projective curve. In this article, we will study admissible subcategories $\mathcal{D} \xhookrightarrow{} D^b(X)$ with a stability condition $\sigma$ of global dimension $gldim(\sigma) < 2$. One of the fundamental implications for categories $\mathcal{D}$, admitting a stability condition, is the existence of well-behaved moduli spaces of semistable objects. By leveraging the fundamental results developed and discussed in \cite{macri2019lectures,2021}, we will identify a suitable curve $C$, in the moduli space of semistable objects in $\mathcal{D}$ under appropriate assumptions, and construct an equivalence of categories $\mathcal{D} \simeq D^b(C)$. As a consequence, we deduce the non-existence of such categories with global dimension $gldim(\sigma) < \frac{6}{5}$, deduce a converse to \cite[Theorem 5.16]{Kikuta_2021} and partially extend \cite[Theorem 5.12]{Kikuta_2021}.
%In this section, we prove several results towards the classification of noncommutative curves equipped with a stability condition complementary to the analysis of \cite{Kikuta_2021}. Specifically, we will prove a partial converse to \cite[Theorem 5.16]{Kikuta_2021}.
\subsection{Summary of results} Throughout this paper, we will work over a fixed field $k$, algebraically closed and of characteristic $0$. Our main interest will be in the classification of $k$-linear triangulated categories $\mathcal{D}$ that are {\em connected}, i.e. do not admit completely orthogonal decompositions, and that are {\em geometric non-commutative schemes}~\cite{MR3545926}, i.e. admit an admissible embedding $\mathcal{D} \xhookrightarrow{} D^b(X)$ with $X$ a smooth, projective variety. Our techniques rely heavily on the theory of Bridgeland stability conditions, and we denote by $Stab(\mathcal{D})$ the space of stability conditions. Moreover, we denote by $Stab_{\mathcal{N},d}(\mathcal{D})$ the subspace of stability conditions $\sigma = (\mathcal{A},Z)$ that are {\em numerical}, i.e. $Z$ factors through the numerical Grothendieck group, and such that the image $Im(Z) \subset \mathbb{C}$ is discrete.

In the following, let $\mathcal{D}$ be a connected, geometric non-commutative scheme. Our main theorem asserts that under the assumption that the dimension is less than $\frac{6}{5}$, we can classify all such categories.
%\begin{repprop}{prop:reconstructionA}
%Assume that $\mathcal{D}$ is a non-rational noncommutative curve in the sense of Definition~\ref{defn:noncommcurve} and that there exists a stability condition $\sigma = (\mathcal{A},Z)$ with $gldim(\sigma) <2$ such that $Im(Z) \subset \mathbb{C}$ is discrete. If there exists an object $E \in \mathcal{D}$ such that $Hom^1(E,E) = \mathbb{C}$, then there exists an admissible embedding $\Phi \colon D^b(C) \xhookrightarrow{} \mathcal{D}$ where $C$ is a smooth projective curve. 

%If instead, there exists a $1$-Calabi-Yau object and $\sigma$ is Serre-invariant, then the same conclusion holds. In this case, if $\mathcal{D}$ is also connected, then $\Phi$ is an equivalence of categories.
%\end{repprop}
\begin{reptheorem}{thm:mainthm}
Assume that $\mathcal{D}$ has no exceptional objects and $\inf\limits_{\sigma \in Stab_{\mathcal{N},d}(\mathcal{D})}gldim(\sigma) < \frac{6}{5}$. 
Then there exists an equivalence $\mathcal{D} \simeq D^b(C)$ with $C$ a smooth projective curve of genus $g \geq 1$.
\end{reptheorem}

%We have presented our main result in such a way as to maximize clarity. Indeed, as we point out in remark~\ref{rem:infdim}, we can replace the assumption on the global dimension with the assumption that $\inf\limits_{\sigma \in Stab(\mathcal{D})}gldim(\sigma) < \frac{6}{5}$ and that there exists a separate stability condition $\sigma'$ which is numerical, has discrete central charge, and satisfies $gldim(\sigma') < 2$. 

As a consequence of \cite[Theorem 5.16]{Kikuta_2021}, which deduced the equality $\inf\limits_{\sigma \in Stab_{\mathcal{N},d}(D^b(C))}gldim(\sigma) = 1$ for all smooth projective curves $C$ of genus $g \geq 1$, we obtain the following corollary.

\begin{repcorollary}{cor:nonexistence}
There exists no such $\mathcal{D}$ with no exceptional objects and $\inf\limits_{\sigma\in Stab_{\mathcal{N},d}(\mathcal{D})}gldim(\sigma) \in( 1, \frac{6}{5})$.
\end{repcorollary}

We further build on Theorem~\ref{thm:mainthm} in dimension $1$ to obtain a characterization in terms of the upper Serre dimension, which roughly quantifies the growth of phases of objects under iterated applications of the Serre functor. The following serves as a converse to \cite[Theorem 5.16]{Kikuta_2021}.

\begin{reptheorem}{thm:infdim1}
Assume that $\mathcal{D}$ has no exceptional objects. Assume that there exist a numerical stability condition $\sigma \in Stab(\mathcal{D})$ such that $gldim(\sigma) < 2$ and $Im(Z) \subset \mathbb{C}$ is discrete. Then the following are equivalent:
\begin{enumerate}
\item
$\inf\limits_{\sigma \in Stab(\mathcal{D})}gldim(\sigma) = 1$
\item
$\mathcal{D} \simeq D^b(C)$ with $C$ a smooth projective curve of genus $g \geq 1$.
\setcounter{serreinv}{\value{enumi}}
\end{enumerate}
If in addition, $\mathcal{D}$ admits a Serre-invariant Bridgeland stability condition, then the above is equivalent to the following.
\begin{enumerate}
\setcounter{enumi}{\value{serreinv}}
\item
$\overline{Sdim}(\mathcal{D}) = 1$
\end{enumerate}
\end{reptheorem}

Finally, we specialize to cases with a stability condition $\sigma$ such that $gldim(\sigma) = 1$ and we obtain a characterization of the semi-orthogonal decomposition in the presence of exceptional objects.

\begin{repcorollary}{cor:gldim1}
Assume that there exists a numerical Bridgeland stability condition $\sigma \in Stab(\mathcal{D})$ such that $Im(Z) \subset \mathbb{C}$ is discrete and such that $gldim(\sigma) = 1$. Then $\mathcal{D} = \langle \mathcal{C}, E_1 ,\ldots E_n \rangle$ admits a semi-orthogonal decomposition for some integer $n$ with $E_i \in \mathcal{D}$ exceptional objects and $\mathcal{C}$ is either zero or equivalent to $D^b(E)$, with $E$ a smooth elliptic curve.
\end{repcorollary}

It would be interesting to investigate the sharpness of our results in Theorem~\ref{thm:mainthm}, and whether the upper bound can be generalized to $\frac{4}{3}$. Indeed, the Kuznetsov component $Ku(X) \subset D^b(X)$ with $X$ the cubic surface is fractional Calabi-Yau of dimension $\frac{4}{3}$. Though it is generated by a full exceptional collection, we expect that our results are insensitive to the base field and that the category, $Ku(X)$, for $X$ a Picard rank $1$ cubic surface over $\mathbb{Q}$ is indecomposable and in fact satisfies the equality $\inf\limits_{\sigma \in Stab_{\mathcal{N},d}(Ku(X))}gldim(\sigma) =\frac{4}{3}$.\footnote{We thank Xiaolei Zhao for conversations regarding this example.}

%We expect that our results in Theorem~\ref{thm:mainthm} are in fact sharp. Indeed, the Kuznetsov component $Ku(X) \subset D^b(X)$ with $X$ the cubic surface is fractional Calabi-Yau of dimension $\frac{6}{5}$. On the other hand, it is generated by a full exceptional collection. Nevertheless, we expect that our results are insensitive to the base field and that the category, $Ku(X)$, for $X$ a Picard rank $1$ cubic surface over $\mathbb{Q}$ is indecomposable and satisfies the equality $\inf\limits_{\sigma \in Stab_{\mathcal{N},d}(Ku(X))}gldim(\sigma) =\frac{6}{5}$.\footnote{We thank Xiaolei Zhao for conversations regarding this example.}

\subsection{Related works}
We note that the papers, \cite{MR2427460} and \cite{ctx31462736420006531}, classified abelian categories of homological dimension $1$ without exceptional objects and admitting Serre duality up to derived equivalence, by relying on the general results in~\cite{MR1887637}. In particular, this implies our reconstruction results in the case of $1$-Calabi-Yau categories. On the other hand, our analysis using moduli space techniques is more direct and is ultimately necessary in higher dimensional cases, where there a priori does not exist a heart of bounded t-structure of homological dimension $1$ satisfying Serre duality.
\subsection{Organization}
The organization of this paper is as follows. In Section~\ref{generalprop}, we formulate basic results for $\sigma$-semistable objects in categories satisfying $gldim(\sigma) < 2$. In Section~\ref{sec:1spherical}, we formulate a stronger condition which guarantees the existence of a $1$-spherical object. In Section~\ref{1cyreconstruction}, we prove a geometric reconstruction result using the moduli space of $\sigma$-semistable objects. Finally in Section~\ref{sec:main}, we prove our main theorem and deduce a number of implications in dimension $1$.
%This section is organized as follows. In subSection~\ref{generalprop}, we prove several basic properties for non-rational noncommutative curves. In subSection~\ref{1cyexistence}, we formulate a general criterion to ensure the existence of a $1$-Calabi-Yau object in a general triangulated category with a stability condition. In subSection~\ref{1cyreconstruction}, we prove Proposition~\ref{prop:reconstructionA}. In subSection~\ref{gldim1}, we prove Theorem~\ref{thm:gldim1}. In subSection~\ref{serreinvariant}, we prove Proposition~\ref{prop:reconstructionS}.
\subsection*{Acknowledgements.}
I would like to thank my advisor, Emanuele Macr\`i, for extensive discussions throughout the years. I am grateful to the University of Paris-Saclay for hospitality during the completion of this work. This work was partially supported by the NSF Graduate Research Fellowship under grant DGE-1451070 and by the ERC Synergy Grant ERC-2020-SyG-854361-HyperK.

%\subsection{Stable objects}\label{sec:stable}

\section{Stable objects on non-commutative curves}\label{generalprop}
Throughout this paper, the theory of Bridgeland stability conditions will serve as a fundamental tool. We will abide by the standard terminology: given a stability condition $\sigma = (\mathcal{A},Z)$ on a triangulated category $\mathcal{T}$, $\mathcal{A}$ denotes the corresponding heart of bounded t-structure, $Z \colon K_0(\mathcal{A}) \rightarrow \mathbb{C}$ denotes the central charge, and $P_\sigma$ denotes the associated slicing. Moreover, given an object $E \in \mathcal{T}$, $\varphi_\sigma^+(E)$ and $\varphi_\sigma^-(E)$ denote the maximal and minimal phases respectively, i.e. the phases of the maximal and minimal $\sigma$-semistable objects in the unique Harder-Narasimhan filtration of $E$.

In this section, we will prove several general results for categories behaving similarly to $D^b(C)$, with $C$ a smooth projective curve of genus $g \geq 1$. We first recall the following definition.
\begin{defn}
Let $\mathcal{T}$ be a triangulated category with a Bridgeland stability condition $\sigma = (\mathcal{A},Z)$. The {\em global dimension} of $\sigma$ is defined as follows.
\[
gldim(\sigma) \coloneqq \sup\{\varphi_2 -\varphi_1 \vert Hom_\mathcal{T}(E_1,E_2) \neq 0 \text{ for } E_i \in \mathcal{P}_\sigma(\varphi_i) \} \in [0,+\infty]
\]
\end{defn}
In order to identify with the setting of higher genus curves, we will also define the following.
\begin{defn}\label{defn:noncommcurve}
A triangulated category $\mathcal{T}$ is
\begin{enumerate}
\item
{\em connected} if there does not exist any non-trivial completely orthogonal decompositions.
\item
{\em non-rational} if there exists no exceptional objects in $\mathcal{D}$.
 \end{enumerate}
\end{defn}

In the following, we will always assume that $\mathcal{T}$ is a non-rational triangulated category with Serre functor $S$ and of finite type over $k$, i.e. for any objects $E,F$, the vector space $\bigoplus\limits_i Hom(E,F[i])$ is finite dimensional. In addition, we assume that $\mathcal{T}$ admits a stability condition $\sigma = (\mathcal{A},Z)$ of global dimension $gldim(\sigma) < 2$.

Recall that there exists a {\em Mukai vector} $v \colon Ob(\mathcal{T}) \rightarrow K_{num}(\mathcal{T})$ sending an object $E \in \mathcal{T}$ to its class in the numerical Grothendieck group where $K_{num}(\mathcal{T}) = K_0(\mathcal{T}) / ker(\chi)$ with $$\chi \coloneqq \sum (-1)^i hom^i(-,-)\colon K_0(\mathcal{T}) \times K_0(\mathcal{T})\rightarrow \mathbb{Z}$$ the Euler pairing. For convenience of notation, for any objects $E, F \in \mathcal{D}$, we define the pairing $(v(E), v(F))\coloneqq -\chi(E,F)$. We first observe the following. 
\begin{lemma}\label{lem:semistable}
Let $E \in \mathcal{D}$ be a $\sigma$-semistable object. Then the following holds.
\begin{enumerate}
\item
We have the inequality $v(E)^2 \geq 0$.
\item
If $v(E)^2 = 0$, then every $\sigma$-stable factor $E_i$ satisfies $v(E_i)^2 = 0$.
\item
$Hom(E,SE[-1]) \neq 0$. 
\end{enumerate}
\end{lemma}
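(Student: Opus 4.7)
The plan is to reduce all three parts of the lemma to a single Euler-form calculation. The key inputs are: $(i)$ the assumption $gldim(\sigma) < 2$, which annihilates all self-$\mathrm{Ext}$ groups of $\sigma$-semistable objects outside degrees $0$ and $1$, and $(ii)$ the hypothesis that $\mathcal{T}$ is non-rational, which forces $hom^1(E,E) \geq 1$ for any stable $E$.

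First I would observe that if $E$ is $\sigma$-semistable of phase $\varphi$, then $E[i]$ is $\sigma$-semistable of phase $\varphi + i$, so the definition of $gldim(\sigma)$ forbids $Hom(E, E[i]) \neq 0$ for $i \geq 2$, while the standard phase-monotonicity for semistables rules out $i < 0$. Hence
\[
v(E)^2 = -\chi(E,E) = hom^1(E,E) - hom^0(E,E).
\]
When $E$ is stable, $End(E) = k$, and the absence of exceptional objects forces $hom^1(E,E) \geq 1$, yielding $v(E)^2 \geq 0$ in the stable case.

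For general $\sigma$-semistable $E$, I would take a Jordan--H\"older filtration and group the stable factors into distinct isomorphism classes $G_1, \ldots, G_n$ of phase $\varphi(E)$ with multiplicities $m_1, \ldots, m_n$. For $i \neq j$, the non-isomorphic stable objects $G_i, G_j$ of the same phase satisfy $Hom(G_i, G_j) = 0$; expanding $v(E) = \sum_i m_i v(G_i)$ then gives
\[
v(E)^2 = \sum_i m_i^2 \bigl(hom^1(G_i, G_i) - 1\bigr) + \sum_{i \neq j} m_i m_j \, hom^1(G_i, G_j),
\]
in which every summand is nonnegative by the stable case, proving $(1)$. For $(2)$, the vanishing $v(E)^2 = 0$ forces each summand to be zero, in particular $hom^1(G_i, G_i) = 1$, whence $v(G_i)^2 = 0$. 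Statement $(3)$ then follows by combining $(1)$ with Serre duality: $hom^1(E,E) = hom^0(E,E) + v(E)^2 \geq 1$, and the isomorphism $Hom(E, E[1]) \cong Hom(E, SE[-1])^\vee$ transports this non-vanishing to $Hom(E, SE[-1]) \neq 0$.

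The main technical point I anticipate is verifying that the phase bound of $gldim(\sigma)$ uniformly annihilates $hom^i(E,E)$ for all $i \notin \{0,1\}$ when $E$ is only semistable (not just stable), and checking the $i < 0$ case carefully via the elementary fact that $Hom$ between semistables vanishes when the source has strictly greater phase than the target. Once this is secured, the three statements emerge cleanly from Euler characteristic bookkeeping together with non-rationality.
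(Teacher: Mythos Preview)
Your proposal is correct and follows essentially the same route as the paper: both arguments use $gldim(\sigma)<2$ to reduce $v(E)^2$ to $hom^1-hom^0$, handle the stable case via non-rationality, and pass to the general semistable case by expanding along a Jordan--H\"older filtration and checking that all cross terms $(v(E_i),v(E_j))$ are nonnegative. Your bookkeeping (grouping by isomorphism class with multiplicities) and your direct argument for part~(3) are minor cosmetic variations on the paper's proof, which treats the $E_i\cong E_j$ and $E_i\not\cong E_j$ cases in place and proves~(3) by contradiction.
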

\begin{proof}
For the first claim, by definition and the assumption $gldim(\sigma) < 2$, we have the equalities
\[
v(E)^2 = -\chi(E,E) = -hom(E,E) + hom(E, E\left[1\right])
\]
We may assume that $E$ is strictly semistable, otherwise, it must be the case that $Hom(E,E) = k$. Then by the assumption of nonexistence of exceptional objects, we must have $Hom(E,E[1]) \neq 0$ and the claim follows. Otherwise, take a Jordan-H\"{o}lder filtration of $E$ with $\sigma$-stable factors $E_i$ of the same phase. Then we have $v(E) = \sum_i v(E_i)$ and in particular, the equality
\begin{equation}\label{eq:vsquare}
v(E)^2 = \sum_i v(E_i)^2 + \sum_{i\neq j} (v(E_i),v(E_j))
\end{equation}
It suffices to prove that $\sum_{i\neq j} (v(E_i),v(E_j)) \geq 0$. By definition, recall that 
\[(v(E_i),v(E_j)) = -hom(E_i,E_j) + hom(E_i,E_j\left[1\right])
\]
If $E_i$ and $E_j$ were isomorphic, then we immediately have $(v(E_i),v(E_j)) \geq 0$ for all $i,j$. But if they were not, then we must have $Hom(E_i,E_j) = 0$ and we conclude.

For the second claim, assume that $v(E)^2 = 0$. Then by equation~\eqref{eq:vsquare}, together with the preceding paragraph, every term in the sum on the rhs of equation~\eqref{eq:vsquare} must be $\geq 0$ and therefore must be $0$. In particular, we must have $v(E_i)^2 = 0$ for all $i$ and we conclude.

For the third claim, assume on the contrary that $Hom(E,SE[-1]) = 0$. Then by Serre duality, we have $Hom(E,E[1]) = 0$. But then it must be the case that $v(E)^2 < 0$, contradicting the first claim.
\end{proof}

Next, we will reformulate the well-known weak Mukai Lemma in our setting using the assumption on the global dimension. 
\begin{lemma}[Weak Mukai Lemma]\label{lem:wml}
Let $A \rightarrow E \rightarrow B$ be an exact triangle in $\mathcal{T}$ satisfying the inequality $\varphi_\sigma^-(A) > \varphi_\sigma^+(B)$.
Then we have the following inequality:
\[
hom^1(A,A) + hom^1(B,B) \leq hom^1(E,E)
\]
\end{lemma}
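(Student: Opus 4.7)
The plan is to adapt the classical proof of the weak Mukai Lemma from the K3 setting (see e.g.\ Huybrechts's book) to this framework. The key first observation is that the combination of $\varphi_\sigma^-(A) > \varphi_\sigma^+(B)$ and $gldim(\sigma) < 2$ forces two vanishings: $Hom(A, B) = 0$ and $Hom(B, A[2]) = 0$. Indeed, for any Harder--Narasimhan factors $A_i$ of $A$ and $B_j$ of $B$ one has $\varphi_\sigma(A_i) > \varphi_\sigma(B_j)$; a nonzero morphism from a semistable of phase $\varphi$ to one of phase $\varphi'$ in degree $k$ requires $\varphi' + k - \varphi \in [0, gldim(\sigma)]$, which is incompatible with both $k = 0$ (by the strict inequality on phases) and $k = 2$ (since then $gldim(\sigma) \geq 2 + \varphi_\sigma(A_i) - \varphi_\sigma(B_j) > 2$).

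From these vanishings I would play off the four long exact sequences obtained by applying $Hom(A, -)$, $Hom(B, -)$, $Hom(-, E)$, and $Hom(-, B)$ to the distinguished triangle $A \to E \to B \to A[1]$. From $Hom(A, -)$ together with $Hom(A, B) = 0$ I obtain an injection $\alpha\colon Hom^1(A, A) \hookrightarrow Hom^1(A, E)$, and from $Hom(-, E)$ I obtain the exact sequence
\[
Hom^1(B, E) \xrightarrow{\beta} Hom^1(E, E) \xrightarrow{\gamma} Hom^1(A, E) \xrightarrow{\delta} Hom^2(B, E).
\]
Chasing the definitions, the composite $\delta \circ \alpha$ sends $a\colon A \to A[1]$ to $\phi_A[2] \circ a[1] \circ \psi$, where $\psi\colon B \to A[1]$ is the defining morphism of the triangle; this factors through $a[1] \circ \psi \in Hom(B, A[2]) = 0$. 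Hence $\alpha$ lifts to an injection $\overline{\alpha}\colon Hom^1(A, A) \hookrightarrow Hom^1(E, E)/Im(\beta)$, yielding $hom^1(A, A) + \dim Im(\beta) \leq hom^1(E, E)$.

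The final step is to show $\dim Im(\beta) \geq hom^1(B, B)$. From $Hom(B, -)$ and $Hom(B, A[2]) = 0$ I obtain a surjection $q\colon Hom^1(B, E) \twoheadrightarrow Hom^1(B, B)$, and from $Hom(-, B)$ and $Hom(A, B) = 0$ an injection $\iota\colon Hom^1(B, B) \hookrightarrow Hom^1(E, B)$. A short diagram chase identifies the composition of $\beta$ with the map $Hom^1(E, E) \to Hom^1(E, B)$ coming from $Hom(E, -)$ applied to the triangle as $\iota \circ q$; in particular this composition already has image of dimension $hom^1(B, B)$, forcing $\dim Im(\beta) \geq hom^1(B, B)$. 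Combining with the previous bound gives the claim. The main (and really the only) technical point is keeping straight how the various connecting morphisms interact with the triangle structure; once the two key vanishings are in hand the rest is a formal diagram chase.
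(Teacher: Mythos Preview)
Your proof is correct and follows essentially the same approach as the paper: both arguments reduce to the two vanishings $Hom(A,B)=0$ and $Hom(B,A[2])=0$, obtained from the phase inequality together with $gldim(\sigma)<2$. The only difference is that the paper cites \cite[Lemma~2.5]{2017} for the subsequent diagram chase, whereas you spell it out explicitly; your chase is accurate.
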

\begin{proof}
It suffices to prove the two vanishings $Hom(B, A[2]) = 0, Hom(A,B) = 0$. The conclusion then follows as in the proof of \cite[Lemma 2.5]{2017}. To see the first claim, we observe the following:
\[
\varphi_\sigma^-(A[2]) = \varphi_\sigma^-(A) + 2 > \varphi_\sigma^+(B) +2
\]
where the first equality follows by uniqueness of the Harder-Narasimhan filtration of $A$ into $\sigma$-semistable factors and the second inequality follows by assumption. In particular, this implies that $\varphi_\sigma(A_i[2]) > \varphi_\sigma(B_j) +2$ for any $\sigma$-semistable factor $A_i[2], B_j$ of $A[2],B$ respectively. But this implies that $Hom(B_j, A_i[2]) = 0$ for all $i, j$ by the assumption that $gldim(\sigma) < 2$. In particular, this implies that $Hom(B,A[2]) = 0$. The second claim follows from an analogous argument using the inequality $\varphi_\sigma^-(A) > \varphi_\sigma^+(B)$ and we conclude.
\end{proof}
We conclude this section by studying objects $E$ with minimal $hom^1(E,E)$, obtaining a certain converse statement to Lemma~\ref{lem:semistable}.
\begin{lemma}\label{lem:1cy}
Let $d$ be the minimum integer such that there exists an object $D \in \mathcal{T}$ with $d = hom^1(D,D)$.
\begin{enumerate}
\item
The integer $d$ is strictly positive.
\item
If $d \geq 2$, then any object $E$ such that $hom^1(E,E) \leq 2d -2$ is $\sigma$-stable.
\item
If $d = 1$, then any object $E$ such that $hom^1(E,E) = 1$ is $\sigma$-semistable and $v(E)^2 = 0$.
 \end{enumerate}
\end{lemma}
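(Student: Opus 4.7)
The strategy is to reduce each assertion to the $\sigma$-semistable case by iterating the weak Mukai Lemma (Lemma~\ref{lem:wml}) along Harder-Narasimhan filtrations, and then to exploit the numerical identity $v(E)^2 = -hom(E,E) + hom^1(E,E)$, valid for $\sigma$-semistable $E$ because $gldim(\sigma)<2$ and semistability together kill $Hom(E,E[j])$ for $j \geq 2$ and for $j \leq -1$ respectively, combined with the non-negativity $v(E)^2 \geq 0$ from Lemma~\ref{lem:semistable}(1).

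For (1), I argue by contradiction: if some $D$ satisfies $hom^1(D,D)=0$, then applying Lemma~\ref{lem:wml} to the top HN triangle $D_1 \to D \to D_{>1}$ forces $hom^1(D_1,D_1) = hom^1(D_{>1},D_{>1}) = 0$, and inducting on the HN length shows every HN factor $F_i$ satisfies $hom^1(F_i,F_i)=0$. Since each $F_i$ is $\sigma$-semistable, the identity above gives $v(F_i)^2 = -hom(F_i,F_i) \leq -1$, contradicting Lemma~\ref{lem:semistable}(1).

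For (2), assume $d\geq 2$, $hom^1(E,E)\leq 2d-2$, and $E$ is not $\sigma$-stable. If $E$ is not semistable, Lemma~\ref{lem:wml} applied to the top HN triangle $A\to E\to B$, combined with the minimality of $d$, yields $2d \leq hom^1(A,A)+hom^1(B,B)\leq 2d-2$, a contradiction. The more delicate case is when $E$ is semistable but not stable, since the Jordan-H\"older filtration has factors of the \emph{same} phase and Lemma~\ref{lem:wml} does not apply directly. Instead, I will expand
\[
v(E)^2 = \sum_i v(E_i)^2 + \sum_{i\neq j}(v(E_i),v(E_j))
\]
over the stable JH factors $E_1,\ldots,E_n$ with $n \geq 2$. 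Minimality of $d$ gives $v(E_i)^2 \geq d-1$, and each cross term is non-negative: if $E_i\cong E_j$ the term equals $v(E_i)^2 \geq d-1$, and otherwise $Hom(E_i,E_j)=0$ by stability of distinct equal-phase objects, so the term equals $hom^1(E_i,E_j) \geq 0$. Thus $v(E)^2 \geq n(d-1) \geq 2(d-1)$, whence $hom^1(E,E) = hom(E,E) + v(E)^2 \geq 1 + 2(d-1) = 2d-1$, contradicting the hypothesis.

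For (3), the same dichotomy with $d=1$ concludes rapidly: in the non-semistable case, Lemma~\ref{lem:wml} and part (1) give $2 \leq hom^1(A,A)+hom^1(B,B)\leq 1$, so $E$ must be semistable; then $v(E)^2 = -hom(E,E)+1 \leq 0$ combined with Lemma~\ref{lem:semistable}(1) forces $v(E)^2 = 0$. The main conceptual obstacle is the semistable-but-not-stable branch of (2): since the strict phase inequality required by weak Mukai fails among Jordan-H\"older factors, I must control $hom^1(E,E)$ purely numerically through the sign of the Mukai pairings $(v(E_i),v(E_j))$ of distinct stable factors of the same phase.
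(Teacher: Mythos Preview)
Your proof is correct and follows essentially the same route as the paper: reduction to semistable factors via the weak Mukai Lemma for (1) and the non-semistable branches of (2), (3), and the numerical expansion $v(E)^2=\sum v(E_i)^2+\sum_{i\neq j}(v(E_i),v(E_j))$ with $(v(E_i),v(E_j))\geq 0$ for the Jordan--H\"older branch of (2). The only cosmetic difference is that you peel off one HN factor at a time whereas the paper sums over all $n$ factors at once; the resulting inequalities $2d\leq 2d-2$ versus $nd\leq 2d-1\Rightarrow n=1$ are equivalent.
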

\begin{proof}
(1): Assume on the contrary that $d = 0$. Let $E$ be an object such that $Hom^1(E,E) = 0$. By Lemma~\ref{lem:wml}, we may assume that $E$ is $\sigma$-semistable. Indeed if not, then taking the Harder-Narasimhan filtration, Lemma~\ref{lem:wml} implies that every $\sigma$-semistable factor $E_i$ must satisfy $Hom^1(E_i,E_i) = 0$. On the other hand, this contradicts Lemma~\ref{lem:semistable}(1).

(2): We first prove that if $d > 0$ and $E$ satisfies $hom^1(E,E) \leq 2d-1$, then $E$ must be $\sigma$-semistable. Assume not. Then the Harder-Narasimhan filtration yields $n$ $\sigma$-semistable factors $E_i$. In particular, we have the inequalities
\[
nd \leq \sum\limits_{i = 1}^nhom^1(E_i,E_i) \leq hom^1(E,E) \leq 2d -1
\]
where the first follows from the minimality of $d$, the second follows from Lemma~\ref{lem:wml}, and the third follows by assumption. Clearly, we must have $n = 1$. 

We now prove that if $d \geq 2$ and $E$ satisfies $hom^1(E,E) \leq 2d-2$, then $E$ is also $\sigma$-stable. Taking a Jordan-Holder filtration into $m$ $\sigma$-stable factors $F_i$ of the same phase, we obtain the equality
\begin{equation*}
v(E)^2 = \sum_i v(F_i)^2 + \sum_{i\neq j} (v(F_i),v(F_j))
\end{equation*}
As in the proof of the first paragraph of Lemma~\ref{lem:semistable}, we must have $ (v(F_i),v(F_j)) \geq 0$ for all non-equal pairs $i,j$. We obtain the inequalities
\[
md - m \leq \sum\limits_{i = 1}^m hom^1(F_i,F_i) -m  = \sum\limits_{i=1}^m v(F_i)^2 \leq v(E)^2 \leq (2d-2) - hom(E,E) \leq 2d-3
\]
where the first follows from the minimality of $d$, the second is by definition, the third follows from the preceding paragraph, the fourth is by definition, and the fifth follows from the inequality $hom(E,E) \geq 1$. On the other hand, this again implies $m=1$, and we conclude.

(3): The fact that $E$ is $\sigma$-semistable follows from the first paragraph of the proof of part $(2)$. The claim that $v(E)^2 = 0$ follows from the fact that $hom(E,E) \geq 1$ and Lemma~\ref{lem:semistable}(1).
\end{proof}

\section{Global dimension bounds and $1$-spherical objects}\label{sec:1spherical}
Let $\mathcal{T}$ be a triangulated category of finite type over $k$ with a Serre functor $S$, admitting a Bridgeland stability condition, i.e. $Stab(\mathcal{D}) \neq 0$. In the following, let $d$ be the minimum integer such that there exists $E \in \mathcal{D}$ with $d = Hom^1(E,E)$. The goal of this section is to prove the following, largely following the methods exhibited in \cite[Section 3]{ctx31462736420006531}.
\begin{prop}\label{prop:1sphericalnew}
Assume $\mathcal{T}$ satisfies $d \geq 1$ and $\inf\limits_{\sigma \in Stab(\mathcal{T})} gldim(\sigma) < \frac{6}{5}$. Then $d = 1$ and every object $F \in \mathcal{T}$ satisfying $hom^1(F,F) = 1$ is $1$-spherical. 
\end{prop}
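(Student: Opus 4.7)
The plan is to address the two conclusions in turn: first establish $d=1$ by contradiction using the $gldim<6/5$ hypothesis, then deduce $1$-sphericity for every $F$ with $hom^1(F,F) = 1$.

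\textbf{Establishing $d = 1$.} Fix $\sigma \in Stab(\mathcal{T})$ with $gldim(\sigma) < 6/5$, which exists by the infimum hypothesis, and assume for contradiction that $d \geq 2$. Choose $E$ realizing $hom^1(E, E) = d$; Lemma~\ref{lem:1cy}(2) gives $\sigma$-stability of $E$, so $v(E)^2 = d - 1 \geq 1$, and Lemma~\ref{lem:semistable}(3) combined with Serre duality yields $\dim Hom(E, SE[-1]) = d \geq 2$. The strategy is to leverage a nonzero morphism $E \to SE[-1]$ to construct an auxiliary object $X$ (e.g.\ as a cone, cocone, or iterated extension built from $E$ and $SE[-1]$ along morphisms between them), and then to bound $hom^1(X, X)$ strictly below $d$ via the weak Mukai Lemma (Lemma~\ref{lem:wml}) applied to an HN-ordered decomposition of $X$. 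The specific constant $6/5$ is expected to enter as the sharp threshold at which the phase inequalities required for Lemma~\ref{lem:wml} just fail; this is the main obstacle of the proof, following the approach of \cite[Section~3]{ctx31462736420006531}.

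\textbf{Proving $1$-sphericity.} With $d = 1$ in hand, let $F$ satisfy $hom^1(F, F) = 1$. By Lemma~\ref{lem:1cy}(3), $F$ is $\sigma$-semistable with $v(F)^2 = 0$; semistability of $F$ together with $gldim(\sigma) < 2$ forces $Hom(F, F[i]) = 0$ for $i \notin \{0, 1\}$, so $\chi(F, F) = -v(F)^2 = 0$ gives $hom^0(F, F) = hom^1(F, F) = 1$, i.e.\ $End(F) = k$. I would then argue $F$ is $\sigma$-stable as follows: a nontrivial direct-sum decomposition of $F$ would split $End(F)$ as a product of rings of total dimension $\geq 2$, contradicting $End(F) = k$; and any nonstable indecomposable semistable object with Jordan--Hölder length $\geq 2$ (necessarily over a single stable isomorphism class, since cross-$Ext^1$ groups between distinct same-phase stable factors vanish by the pairing analysis of Lemma~\ref{lem:semistable} combined with $v(F)^2 = 0$) admits a nonzero nilpotent endomorphism induced by its Jordan--Hölder filtration, again contradicting $End(F) = k$. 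Hence $F$ is $\sigma$-stable with Ext algebra $k \oplus k[-1]$.

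It remains to verify $SF \cong F[1]$. Serre duality gives $\dim Hom(F, SF[-1]) = hom^1(F, F) = 1$; let $g$ generate this space. Applying the same stability argument to $SF$ (whose self-Ext groups coincide with those of $F$, since $S$ is an equivalence) shows that $SF$, and hence $SF[-1]$, is also $\sigma$-stable. The nonvanishing of $Hom(F, SF[-1])$ together with the bound $gldim(\sigma) < 6/5$ applied to the nonzero $Hom(F, SF)$ confines $\varphi_\sigma(SF) - \varphi_\sigma(F)$ to the interval $[1, 6/5)$, and a symmetric Serre-duality argument (e.g.\ invoking $Hom(SF[-1], F) \cong Hom(F, S^2 F[-1])^*$ together with the analogous phase constraint on $S^2 F$) pins this difference to exactly $1$. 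Same-phase $\sigma$-stable objects admitting a nonzero morphism between them are isomorphic, so $g$ is an isomorphism and $SF \cong F[1]$, completing the verification that $F$ is $1$-spherical.
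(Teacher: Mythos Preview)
Your outline for establishing $d=1$ is on the right track and matches the paper's strategy (form the cone $C$ of a nonzero $f\colon E\to SE[-1]$ and squeeze $hom^1(C,C)$ below $d$), though as you acknowledge, the phase bookkeeping that makes $\tfrac{6}{5}$ appear is the whole content and is not carried out here. The paper does this by first choosing a specific heart $\mathcal{A}=\mathcal{P}(\varphi_0,\varphi_0+1]$ centered so that $E,SE[-1]\in\mathcal{A}$ with controlled margin (Definition~\ref{defn:noncurve}, Lemma~\ref{lem:65noncurve}), then using a technical bound (Lemma~\ref{lem:mainbound}) rather than the weak Mukai lemma directly; one also needs a \emph{rotated} heart $\mathcal{A}'=\mathcal{P}(\varphi_0+\tfrac{1}{5},\varphi_0+\tfrac{6}{5}]$ to get $hom^1(C,E)\geq d$, and it is precisely this second rotation that forces the threshold down to $\tfrac{6}{5}$ rather than $\tfrac{4}{3}$.

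The genuine gap is in your $1$-sphericity argument. Your reduction to $F$ being $\sigma$-stable with $End(F)=k$ is fine, and the bound $\varphi(SF)-\varphi(F)\in[1,\tfrac{6}{5})$ is correct. But the ``symmetric Serre-duality argument'' you sketch does not pin this down to $1$: if $\varphi(SF[-1])>\varphi(F)$ then $Hom(SF[-1],F)=0$, so its Serre dual $Hom(F,S^2F[-1])$ also vanishes, and no contradiction arises---the phase increments $\varphi(S^{n+1}F)-\varphi(S^nF)$ are each constrained to $[1,\tfrac{6}{5})$ but nothing forces them to be equal or to sum telescopically to an integer without a Serre-invariance hypothesis you do not have. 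The paper instead reuses the cone machinery: with $d=1$ one shows $hom(E,\mathrm{im}(f))=hom^1(E,\mathrm{im}(f))=1$, and then Lemma~\ref{lem:mainbound} gives $hom^1(\mathcal{H}^{-1}(C),\mathcal{H}^{-1}(C))\leq d-1+1-1=0$ (and likewise for $\mathcal{H}^0(C)$), contradicting $d=1$ unless $C=0$, i.e.\ $f$ is an isomorphism. So the same heart-and-cone framework that handles $d\geq 2$ is what actually closes the $1$-spherical case as well.
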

\subsection{Bounds on extensions of objects with minimal $\bm{hom^1}$}\label{sec:minimal} In this subsection, we introduce a stronger condition than the existence of a stability condition $\sigma$ satisfying $gldim(\sigma) <2$, summarized in Definition~\ref{defn:noncurve}. Under this assumption, we deduce a number of restrictions on objects $E$ with minimal $hom^1(E,E)$. Most notably, we demonstrate in Lemma~\ref{lem:1spherical} that if $d = 1$, then any such object must be $1$-spherical. In Lemma~\ref{lem:restrictC}, we derive bounds on the object $Cone(E \rightarrow SE[-1])$ in the case $d \geq 2$. 

We first introduce the following definition which refines the assumption on the existence of a stability condition $\sigma$ satisfying $gldim(\sigma) <2$.
\begin{defn}\label{defn:noncurve}
We will say that $\mathcal{T}$ is a {\em noncommutative curve} if there exists a stability condition $\sigma = (\mathcal{A} = \mathcal{P}(\varphi_0,\varphi_0+1],Z)$ such that
\begin{enumerate}
\item
There exists an object $E \in \mathcal{T}$ with $hom^1(E,E) = d$ such that $E, SE[-1] \in \mathcal{A}$. 
\item
We have the inequalities: $$(\varphi_0 + 2) - \varphi(SE[-1]) > gldim(\sigma),\qquad (\varphi(E) + 2) - (\varphi_0 + 1) > gldim(\sigma)$$
\end{enumerate}
\end{defn}
\begin{rem}\label{rem:noncurvedim2}
If $\mathcal{T}$ is a noncommutative curve, then Definition~\ref{defn:noncurve}(2) immediately implies that the stability condition $\sigma$ satisfies $gldim(\sigma) < 2$. In addition, if $d \geq 1$, then $\mathcal{T}$ is also non-rational and the results in Section~\ref{generalprop} apply.
\end{rem}

In the following, $\mathcal{T}$ will always be a non-commutative curve such that $d \geq 1$. We fix an object $E \in \mathcal{T}$ and a stability condition $\sigma = (\mathcal{A} = \mathcal{P}(\varphi_0,\varphi_0+1],Z)$ satisfying Definition~\ref{defn:noncurve}. By Lemma~\ref{lem:1cy}, we note that $E$ and $SE[-1]$ are semistable with respect to any stability condition $\sigma$ with $gldim(\sigma) < 2$. Let $f \in Hom(E,SE[-1]) = Hom^1(E,E) \neq 0$ be any nonzero morphism and consider the exact triangle.
\[
\begin{tikzcd}
E \arrow[r, "f"] & SE[-1] \arrow[r] & C
\end{tikzcd}
\]
Taking cohomology with respect to $\mathcal{A}$, we have the following exact sequences
\begin{equation}\label{eq:exact1}
\begin{tikzcd}
0 \arrow[r] &\mathcal{H}^{-1}(C) \arrow[r] & E \arrow[r] & im(f) \arrow[r]& 0 
\end{tikzcd}
\end{equation}
\begin{equation}\label{eq:exact2}
\begin{tikzcd}
0 \arrow[r] & im(f) \arrow[r]& SE[-1] \arrow[r] &  \mathcal{H}^0(C) \arrow[r] &0 
\end{tikzcd}
\end{equation}
We begin by recording the following general Lemmas which will be used extensively later in establishing Proposition~\ref{prop:1sphericalnew}. We first study some general bounds imposed on phases of the object $C$.
\begin{lemma}\label{lem:boundC}
The objects $E, SE[-1]$ are $\sigma$-semistable. If $\mathcal{H}^{-1}(C), \mathcal{H}^0(C) \neq 0$, then we have the following chain of inequalities on the phases.
\begin{align*}
\varphi_0 &< \varphi^-(\mathcal{H}^{-1}(C)) \leq \varphi^+(\mathcal{H}^{-1}(C)) \leq \varphi(E) \\
&\leq \varphi^-(im(f)) \leq \varphi^+(im(f)) \leq \varphi(SE[-1])\\ &\leq \varphi^-(\mathcal{H}^0(C)) \leq \varphi^+(\mathcal{H}^0(C)) \leq \varphi_0+1
\end{align*}
\end{lemma}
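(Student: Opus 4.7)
The plan is to split the statement into two parts. First, I would deduce the $\sigma$-semistability of $E$ and $SE[-1]$ from Lemma~\ref{lem:1cy}; second, I would read off the chain of phase inequalities from the two short exact sequences~\eqref{eq:exact1}--\eqref{eq:exact2} by comparing the Harder-Narasimhan filtrations of the sub- and quotient objects with the (now known) $\sigma$-semistable objects $E$ and $SE[-1]$.

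For the semistability, since the Serre functor $S$ is an auto-equivalence we have $hom^1(SE[-1],SE[-1]) = hom^1(E,E) = d$, so both $E$ and $SE[-1]$ realize the minimum $d$. If $d = 1$, Lemma~\ref{lem:1cy}(3) gives $\sigma$-semistability directly, while if $d \geq 2$, the bound $d \leq 2d-2$ together with Lemma~\ref{lem:1cy}(2) in fact yields $\sigma$-stability. The central tool for the phase inequalities will be the following elementary observation, which I would establish in passing: for a short exact sequence $0 \to A \to B \to D \to 0$ in $\mathcal{A}$ with $B$ $\sigma$-semistable of phase $\varphi(B)$, one has $\varphi^+(A) \leq \varphi(B) \leq \varphi^-(D)$. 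Indeed, since $A, D \in \mathcal{A} = \mathcal{P}_\sigma((\varphi_0,\varphi_0+1])$, all of their HN factors still lie in $\mathcal{A}$, and so the maximal HN subobject $A_1 \hookrightarrow A \hookrightarrow B$ in $\mathcal{A}$ yields a nonzero morphism between $\sigma$-semistable objects; the standard HN vanishing axiom then forces $\varphi^+(A) = \varphi(A_1) \leq \varphi(B)$, and the opposite inequality on $D$ follows by a dual argument applied to the minimal HN quotient of $D$.

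I would then apply this principle twice: to~\eqref{eq:exact1} with $B = E$ to get $\varphi^+(\mathcal{H}^{-1}(C)) \leq \varphi(E) \leq \varphi^-(im(f))$, and to~\eqref{eq:exact2} with $B = SE[-1]$ to get $\varphi^+(im(f)) \leq \varphi(SE[-1]) \leq \varphi^-(\mathcal{H}^0(C))$; splicing these yields the middle portion of the asserted chain. The outer strict bound $\varphi_0 < \varphi^-(\mathcal{H}^{-1}(C))$ and the upper bound $\varphi^+(\mathcal{H}^0(C)) \leq \varphi_0 + 1$ are immediate from the assumption that the two cohomology objects are nonzero members of $\mathcal{A} = \mathcal{P}_\sigma((\varphi_0,\varphi_0+1])$. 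I do not anticipate any serious obstacle; the only point requiring mild care is verifying that $im(f)$ and the two cohomology objects genuinely lie in $\mathcal{A}$, which is automatic since they are image and cohomology objects in the abelian heart itself.
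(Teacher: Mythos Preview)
Your proposal is correct and follows essentially the same route as the paper: semistability via Lemma~\ref{lem:1cy}, the outer bounds from membership in $\mathcal{A}$, and the middle inequalities from the sub/quotient comparison in the two exact sequences. The only cosmetic difference is that the paper cites \cite[Lemma 3.4]{2002math.....12237B} for the fact that $\varphi^+(A)\leq\varphi(B)\leq\varphi^-(D)$ in a short exact sequence with semistable middle term, whereas you reprove this standard observation directly.
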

\begin{proof}
The claim that $E,SE[-1]$ are $\sigma$-semistable follows directly from the proof of Lemma~\ref{lem:1cy}(2). The first and last inequalities follow from the assumption that all objects are contained in $\mathcal{A}$. The second, fifth, and eighth inequalities are clear by definition. The rest of the inequalities follow directly from \cite[Lemma 3.4]{2002math.....12237B}.
\end{proof}
\begin{lemma}\label{lem:simple}
$E$ is a simple object, i.e. $hom(E,E) = 1$. 
\end{lemma}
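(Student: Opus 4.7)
The plan is to split on the value of $d$ and invoke Lemma~\ref{lem:1cy} in each case, using the relation between $v(E)^2$ and $\mathrm{hom}^\bullet(E,E)$ that is afforded by the global dimension assumption. Since $\mathrm{gldim}(\sigma) < 2$ and $E$ is semistable of some phase $\varphi(E)$, we have $\mathrm{Hom}(E,E[i]) = 0$ for all $i \neq 0, 1$ (because nonzero morphisms between semistable objects of the same phase-class only occur in shifts that preserve or raise the phase by less than $\mathrm{gldim}(\sigma)$). Consequently
\[
v(E)^2 \;=\; -\chi(E,E) \;=\; \mathrm{hom}^1(E,E) - \mathrm{hom}(E,E) \;=\; d - \mathrm{hom}(E,E).
\]

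In the case $d = 1$, Lemma~\ref{lem:1cy}(3) applies directly to $E$ and yields $v(E)^2 = 0$. Substituting into the identity above gives $\mathrm{hom}(E,E) = 1$, and we are done.

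In the case $d \geq 2$, I would observe that $d \leq 2d-2$, so Lemma~\ref{lem:1cy}(2) implies that $E$ is $\sigma$-stable. The conclusion $\mathrm{hom}(E,E) = 1$ is then the standard consequence of Bridgeland stability over an algebraically closed field: $\mathrm{End}(E)$ is a finite-dimensional associative $k$-algebra (by finite type of $\mathcal{T}$) in which every nonzero element is invertible (any nonzero endomorphism of a stable object has no kernel or cokernel among destabilizing subobjects of the same phase, forcing it to be an isomorphism), hence a finite-dimensional division algebra over $k$. Since $k = \bar k$, Schur's lemma gives $\mathrm{End}(E) = k$.

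Neither step is technically difficult; the entire proof is essentially a corollary of Lemma~\ref{lem:1cy}. The only mild subtlety is to confirm that in the case $d \geq 2$, the ``stability implies simple'' step is valid in the Bridgeland setting, but this is standard and is already implicit throughout the framework of \cite{2002math.....12237B}. Accordingly I do not anticipate any serious obstacle.
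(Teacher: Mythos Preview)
Your proof is correct and follows essentially the same route as the paper: split on $d$, and for $d\geq 2$ invoke Lemma~\ref{lem:1cy}(2) to obtain $\sigma$-stability and hence simplicity, while for $d=1$ use the identity $v(E)^2 = d - \mathrm{hom}(E,E)$ together with $v(E)^2\geq 0$. The only cosmetic difference is that the paper cites Lemma~\ref{lem:semistable}(1) (giving $v(E)^2\geq 0$) and Lemma~\ref{lem:boundC} for semistability, whereas you cite Lemma~\ref{lem:1cy}(3) (which already packages both semistability and $v(E)^2=0$); the content is the same.
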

\begin{proof}
If $d = 1$, then the statement follows from Lemma~\ref{lem:semistable}(1) and the fact that $E$ is $\sigma$-semistable from Lemma~\ref{lem:boundC} . If $d \geq 2$, then this follows directly by Lemma~\ref{lem:1cy}(2) as $E$ is $\sigma$-stable.
\end{proof}
Finally, we record the following calculation which will be used extensively later, which highlights the utility of Definition~\ref{defn:noncurve}.
%\begin{lemma}\label{lem:longsequence}
%We have the following equalities.
%\begin{align*}
%Hom^1(E,\mathcal{H}^{-1}(C)) &= d - 1 + hom(E, im(f)) - hom^1(E, im(f)) \\
%Hom^1(\mathcal{H}^0(C), SE[-1]) &= d - 1 - hom(E, im(f)) + hom^1(E, im(f)) 
%\end{align*}
%\end{lemma}
%\begin{proof}
%We first observe that $hom(E,E) = 1$. If $d = 1$, then $E$ is $\sigma$-semistable by Lemma~\ref{lem:1cy} and the statement follows from Lemma~\ref{lem:semistable}(1). If $d \geq 2$, then this follows directly by Lemma~\ref{lem:1cy} as $E$ is $\sigma$-stable.

%We now claim the vanishings $Hom(E, \mathcal{H}^{-1}(C))= 0$ and $Hom^2(E, \mathcal{H}^{-1}(C)) = 0$. The first equality then follows by applying $Hom(E,\cdot)$ to sequence~\eqref{eq:exact1}. For the first claim, assume $Hom(E, \mathcal{H}^{-1}(C)) \neq 0$. Then the composition with the injection $\mathcal{H}^{-1}(C) \hookrightarrow E$ yields a nonzero morphism which is not an isomorphism, contradicting the fact that $hom(E,E) = 1$. The second claim follows from the fact that $\mathcal{H}^{-1}(C), SE[-1] \in \mathcal{A}$ and thus $Hom^2(E, \mathcal{H}^{-1}(C)) = Hom(\mathcal{H}^{-1}(C)[1], SE[-1]) = 0$.

%The second equality follows from an analogous argument by applying $Hom(\cdot, SE[-1])$ to sequence~\eqref{eq:exact2} and noting that
%\begin{align*}
%Hom(\mathcal{H}^0(C), SE[-1]) &= 0 \\
%Hom^2(\mathcal{H}^0(C), SE[-1]) &= Hom(E[1], \mathcal{H}^0(C)) = 0
%\end{align*}
%\end{proof}

\begin{lemma}\label{lem:mainbound}
We have the following inequalities.
\begin{align*}
hom^1(\mathcal{H}^{-1}(C),\mathcal{H}^{-1}(C)) &\leq hom^1(E,\mathcal{H}^{-1}(C)) = d - 1 + hom(E, im(f)) - hom^1(E, im(f)) \\
hom^1(\mathcal{H}^0(C),\mathcal{H}^0(C)) &\leq hom^1(\mathcal{H}^0(C), SE[-1]) = d - 1 - hom(E, im(f)) + hom^1(E, im(f)) 
\end{align*}
\end{lemma}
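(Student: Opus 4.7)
The plan is to apply $Hom$-functors to the short exact sequences \eqref{eq:exact1} and \eqref{eq:exact2} and extract the two equalities and two inequalities by dimension-counting the resulting six-term exact sequences. Three inputs will be combined: the simplicity $hom(E,E) = 1$ from Lemma~\ref{lem:simple}, the vanishing of $Hom^2$ between $\sigma$-semistable objects of the same phase forced by $gldim(\sigma) < 2$, and Serre duality together with the fact that both $E$ and $SE[-1]$ lie in $\mathcal{A}$.

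For the equality $hom^1(E, \mathcal{H}^{-1}(C)) = d - 1 + hom(E, im(f)) - hom^1(E, im(f))$, I would apply $Hom(E, -)$ to \eqref{eq:exact1}. The quotient $E \twoheadrightarrow im(f)$ sends $\mathrm{id}_E$ to a nonzero element of $Hom(E, im(f))$, so the induced map $Hom(E,E) \to Hom(E, im(f))$ is nonzero, hence injective since $hom(E,E) = 1$, forcing $Hom(E, \mathcal{H}^{-1}(C)) = 0$. The $\sigma$-semistability of $E$ from Lemma~\ref{lem:boundC} together with $gldim(\sigma) < 2$ gives $Hom^2(E, E) = 0$. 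The key additional vanishing is $Hom^2(E, \mathcal{H}^{-1}(C)) = 0$, which I obtain from Serre duality: $Hom^2(E, \mathcal{H}^{-1}(C)) \cong Hom(\mathcal{H}^{-1}(C), SE[-1][-1])^*$, and since both arguments are objects of the heart $\mathcal{A}$, the $[-1]$-shift forces this to vanish. Computing dimensions along the resulting six-term exact sequence yields the desired equality. The mirror equality is obtained analogously by applying $Hom(-, SE[-1])$ to \eqref{eq:exact2}, using that $SE[-1]$ is also simple since the Serre functor is an autoequivalence, and invoking the Serre duality identifications $hom(im(f), SE[-1]) = hom^1(E, im(f))$ and $hom^1(im(f), SE[-1]) = hom(E, im(f))$.

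For the two inequalities I would apply $Hom(-, \mathcal{H}^{-1}(C))$ to \eqref{eq:exact1} and $Hom(\mathcal{H}^0(C), -)$ to \eqref{eq:exact2}; each inequality reduces to surjectivity of a connecting map, equivalently to $Hom^2(im(f), \mathcal{H}^{-1}(C)) = 0$ and $Hom^2(\mathcal{H}^0(C), im(f)) = 0$. Here the heart-plus-Serre argument is no longer available, and I would instead use the quantitative phase bounds of Lemma~\ref{lem:boundC} combined with Definition~\ref{defn:noncurve}(2). For example, any HN factor $A_i$ of $im(f)$ satisfies $\varphi(A_i) \leq \varphi(SE[-1])$ and any HN factor $B_j$ of $\mathcal{H}^{-1}(C)$ satisfies $\varphi(B_j) > \varphi_0$, so $Hom^2(A_i, B_j) \neq 0$ would force $\varphi(A_i) \geq \varphi(B_j) + 2 - gldim(\sigma) > \varphi_0 + 2 - gldim(\sigma) > \varphi(SE[-1])$, a contradiction; the mirror vanishing uses the other inequality $(\varphi(E) + 2) - (\varphi_0 + 1) > gldim(\sigma)$ applied to the pair of intervals $[\varphi(E), \varphi(SE[-1])]$ and $[\varphi(SE[-1]), \varphi_0+1]$. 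The main obstacle is the bookkeeping: the two flavors of $Hom^2$-vanishing (those that follow cleanly from the heart property via Serre duality, versus those that require the precise phase gaps built into Definition~\ref{defn:noncurve}(2)) are complementary, and one must verify in each of the four places a $Hom^2$ appears that the correct mechanism applies.
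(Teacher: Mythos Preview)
Your proposal is correct and follows essentially the same approach as the paper: apply $Hom(E,-)$ and $Hom(-,SE[-1])$ to sequences~\eqref{eq:exact1} and~\eqref{eq:exact2} for the equalities (using simplicity of $E$ and the heart/Serre-duality vanishing of $Hom^2$), and apply $Hom(-,\mathcal{H}^{-1}(C))$ and $Hom(\mathcal{H}^0(C),-)$ for the inequalities (using the phase-gap vanishings coming from Definition~\ref{defn:noncurve}(2)). Your observation that the four relevant $Hom^2$-vanishings split into two ``heart/Serre'' ones and two ``phase-gap'' ones is exactly the structure of the paper's argument; the only superfluous step is the mention of $Hom^2(E,E)=0$, which is true but not actually needed once $Hom^2(E,\mathcal{H}^{-1}(C))=0$ terminates the six-term sequence.
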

\begin{proof}
We first establish the following equalities.
\begin{align*}
hom^1(E,\mathcal{H}^{-1}(C)) &= d - 1 + hom(E, im(f)) - hom^1(E, im(f)) \\
hom^1(\mathcal{H}^0(C), SE[-1]) &= d - 1 - hom(E, im(f)) + hom^1(E, im(f)) 
\end{align*}
We claim the vanishings $Hom(E, \mathcal{H}^{-1}(C))= 0$ and $Hom^2(E, \mathcal{H}^{-1}(C)) = 0$. The first equality then follows by applying $Hom(E,\cdot)$ to sequence~\eqref{eq:exact1}. For the first claim, assume $Hom(E, \mathcal{H}^{-1}(C)) \neq 0$. Then the composition with the injection $\mathcal{H}^{-1}(C) \hookrightarrow E$ yields a nonzero morphism which is not an isomorphism, contradicting the fact that $hom(E,E) = 1$ by Lemma~\ref{lem:simple}. The second claim follows from the fact that $\mathcal{H}^{-1}(C), SE[-1] \in \mathcal{A}$ and thus $Hom^2(E, \mathcal{H}^{-1}(C)) = Hom(\mathcal{H}^{-1}(C)[1], SE[-1]) = 0$. The second equality follows from an analogous argument by applying $Hom(\cdot, SE[-1])$ to sequence~\eqref{eq:exact2} and noting that $Hom(\mathcal{H}^0(C), SE[-1]) = 0$ and $Hom^2(\mathcal{H}^0(C), SE[-1]) = Hom(E[1], \mathcal{H}^0(C)) = 0$.

We now establish the inequalities
\begin{align*}
hom^1(\mathcal{H}^{-1}(C),\mathcal{H}^{-1}(C)) &\leq hom^1(E,\mathcal{H}^{-1}(C))\\
hom^1(\mathcal{H}^0(C),\mathcal{H}^0(C)) &\leq hom^1(\mathcal{H}^0(C),SE[-1])
\end{align*}
which, together with the above, implies the result. We claim the vanishings $Hom^2(im(f), \mathcal{H}^{-1}(C)) = 0$ and $Hom^2(\mathcal{H}^0(C) , im(f)) = 0$. The inequalities then follow by applying $Hom(\cdot,\mathcal{H}^{-1}(C))$ to sequence~\eqref{eq:exact1} and $Hom(\mathcal{H}^0(C),\cdot)$ to sequence~\eqref{eq:exact2}, respectively. The first claim follows from the inequalities
\[
(\varphi^-(\mathcal{H}^{-1}(C)) + 2) - \varphi^+(im(f)) \geq (\varphi_0 + 2) - \varphi(SE[-1]) > gldim(\sigma)
\]
by applying Definition~\ref{defn:noncurve}(2) and Lemma~\ref{lem:boundC}. Similarly, the second claim follows from the inequalities
\[
(\varphi^-(im(f)) + 2) - \varphi^+(\mathcal{H}^0(C)) \geq (\varphi(E) + 2) - (\varphi_0 + 1) > gldim(\sigma)
\]
\end{proof}

We now proceed to the critical Lemmas in establishing Proposition~\ref{prop:1sphericalnew}. We first demonstrate that it suffices to reduce to the case $d = 1$.
\begin{lemma}\label{lem:1spherical}
Assume that $d = 1$. Then $E$ is a $1$-spherical object.
\end{lemma}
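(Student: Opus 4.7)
The plan is to analyze the cone $C$ of the nonzero morphism $f\colon E \to SE[-1]$ and show that $C = 0$; this will force $f$ to be an isomorphism and hence yield the $1$-Calabi--Yau property $SE \simeq E[1]$, after which the remaining axioms for $1$-sphericality follow from Serre duality and the t-structure hypothesis.

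First I would specialize Lemma~\ref{lem:mainbound} to $d = 1$, in which case the $d-1$ summands disappear and the two inequalities read
\begin{align*}
hom^1(\mathcal{H}^{-1}(C),\mathcal{H}^{-1}(C)) &\leq hom(E, im(f)) - hom^1(E, im(f)),\\
hom^1(\mathcal{H}^0(C),\mathcal{H}^0(C)) &\leq -hom(E, im(f)) + hom^1(E, im(f)).
\end{align*}
On addition, the $im(f)$ cross terms cancel and one obtains $hom^1(\mathcal{H}^{-1}(C),\mathcal{H}^{-1}(C)) + hom^1(\mathcal{H}^0(C),\mathcal{H}^0(C)) \leq 0$. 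Since both summands are nonnegative, both must vanish. By the minimality of $d = 1$, every nonzero object $D \in \mathcal{T}$ satisfies $hom^1(D,D) \geq 1$, so the vanishings above force $\mathcal{H}^{-1}(C) = 0 = \mathcal{H}^0(C)$, i.e.\ $C = 0$. Hence $f$ is an isomorphism and $SE \simeq E[1]$.

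To upgrade the $1$-Calabi--Yau identity to $1$-sphericality, I would combine $SE \simeq E[1]$ with $hom(E,E) = 1$ (Lemma~\ref{lem:simple}) and the standing hypothesis $hom^1(E,E) = 1$. Since $E \in \mathcal{A}$, one has $Hom^{<0}(E,E) = 0$; Serre duality together with $SE \simeq E[1]$ then gives $Hom^i(E,E) \cong Hom^{1-i}(E,E)^{\ast}$, which transfers the negative vanishing into $Hom^{>1}(E,E) = 0$. Thus $Hom^{\ast}(E,E) \cong k \oplus k[-1]$, which is precisely the $1$-spherical condition.

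The main obstacle is the vanishing step producing $C = 0$. This is where Definition~\ref{defn:noncurve}(2) is crucially used through Lemma~\ref{lem:mainbound}: only with those phase gaps are the bounds tight enough that the $im(f)$ terms cancel additively and force both cohomologies of $C$ to be trivial. Once $C = 0$ is established, the remaining vanishings are a formal consequence of the t-structure and Serre duality, so no further quantitative work should be required.
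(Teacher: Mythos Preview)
Your proof is correct and follows essentially the same approach as the paper: both reduce to showing that the cohomologies of $C$ have vanishing $hom^1$, contradicting $d=1$, via the bounds of Lemma~\ref{lem:mainbound}. The only difference is cosmetic: the paper first computes $hom(E,im(f)) = hom^1(E,im(f)) = 1$ explicitly (using the injection $im(f)\hookrightarrow SE[-1]$, the surjection $E\twoheadrightarrow im(f)$, and Serre duality) and then applies one of the two inequalities, whereas you add both inequalities and let the $im(f)$ terms cancel---a slightly slicker bookkeeping that avoids the explicit computation.
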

\begin{proof}
From the injection $im(f) \hookrightarrow SE[-1]$, the surjection $E \twoheadrightarrow im(f)$, and the assumption $hom(E,SE[-1]) = hom^1(E,E) = 1$, we obtain the inequalities
\begin{align*}
1 &\leq hom(E, im(f)) \leq hom(E,SE[-1]) = 1 \\
1 &\leq hom(im(f),SE[-1]) \leq hom(E,SE[-1]) = 1
\end{align*}
and hence, we have $hom(E,im(f)) = hom^1(E,im(f)) = 1$.

We may assume that $f$ is not an isomorphism, otherwise the conclusion follows immediately. Then it must be the case that either $\mathcal{H}^{-1}(C)$ or $\mathcal{H}^0(C)$ is nonzero. Assume the former case. By Lemma~\ref{lem:mainbound}, we have the following inequality
%\begin{equation*}
%Hom^1(E,\mathcal{H}^{-1}(C)) = d - 1 + hom(E, im(f)) - hom^1(E, im(f))
%\end{equation*}
%Together with the fact that $hom(E,E) = hom^1(E,E) = 1$ and the results of the first paragraph, this implies that $hom^1(E,\mathcal{H}^{-1}(C)) = 0$. On the other hand, by Lemma~\ref{lem:vanishings}, we have $Hom^2(im(f),\mathcal{H}^{-1}(C)) = 0$.
%
%we claim that $Hom^2(im(f),\mathcal{H}^{-1}(C)) = 0$. Indeed, we have 
%\[
%(\varphi^-(\mathcal{H}^{-1}(C)) + 2) - \varphi^+(im(f)) \geq (\varphi_0 + 2) - \varphi(SE[-1]) > gldim(\sigma)
%(\varphi^-(im(f)) + 2)- \varphi^+(\mathcal{H}^{-1}(C)) \geq (\varphi_0 +2) - \varphi^+(\mathcal{H}^{-1}(C)) \geq (\varphi_0+2) - \varphi(SE[-1]) > gldim(\sigma)
%\]
%by applying Lemma~\ref{lem:boundC} and Definition~\ref{defn:noncurve}(2). 
%Thus, applying $Hom(\cdot,\mathcal{H}^{-1}(C))$ to sequence~\eqref{eq:exact1}, we have 
\[hom^1(\mathcal{H}^{-1}(C),\mathcal{H}^{-1}(C)) \leq d - 1 + hom(E, im(f)) - hom^1(E, im(f)) = 0
\]
and so $hom^1(\mathcal{H}^{-1}(C),\mathcal{H}^{-1}(C)) = 0$, contradicting the assumption that $d = 1$ is minimal.

The proof for the case that $\mathcal{H}^0(C)$ is nonzero follows from an identical argument.
\end{proof}
To address the case $d \geq 2$, we prove that the object $C$ must satisfy stronger bounds.
\begin{lemma}\label{lem:restrictC}
Assume that $d \geq 2$. Then exactly one of the objects $\mathcal{H}^{-1}(C)$ or $\mathcal{H}^0(C)$ is nonzero. In addition, $C$ must be $\sigma$-stable.
\end{lemma}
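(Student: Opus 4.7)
The plan is to combine the upper bounds on $hom^{1}$ of the cohomology objects coming from Lemma~\ref{lem:mainbound} with the universal lower bound $hom^{1}(F,F) \geq d$ that holds for any nonzero object $F$ by the minimality of $d$. Writing $a \coloneqq hom(E, im(f))$ and $b \coloneqq hom^{1}(E, im(f))$, Lemma~\ref{lem:mainbound} gives
\[
hom^{1}(\mathcal{H}^{-1}(C),\mathcal{H}^{-1}(C)) \leq d-1+a-b, \qquad hom^{1}(\mathcal{H}^{0}(C),\mathcal{H}^{0}(C)) \leq d-1-a+b,
\]
whose sum is exactly $2d-2$. If both cohomologies were nonzero, the lower bound would force $2d \leq 2d-2$, a contradiction. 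Hence at most one of $\mathcal{H}^{-1}(C)$, $\mathcal{H}^{0}(C)$ is nonzero.

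To see that at least one is nonzero, assume otherwise, so that $f \colon E \to SE[-1]$ is an isomorphism, i.e.\ $E \cong SE[-1]$. Serre duality then yields
\[
hom(E,E) = hom(E, SE)^{*} = hom^{1}(E, SE[-1])^{*} = hom^{1}(E,E)^{*} = d,
\]
which contradicts $hom(E,E)=1$ from Lemma~\ref{lem:simple} since $d \geq 2$. Combined with the previous paragraph, this gives the first assertion.

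For the stability of $C$, without loss of generality suppose $\mathcal{H}^{-1}(C) \neq 0$ and $\mathcal{H}^{0}(C) = 0$, so that $C \cong \mathcal{H}^{-1}(C)[1]$ and stability of $C$ is equivalent to stability of $\mathcal{H}^{-1}(C)$. The vanishing $\mathcal{H}^{0}(C)=0$ together with the second bound of Lemma~\ref{lem:mainbound} gives $0 \leq d-1-a+b$, i.e.\ $a - b \leq d - 1$. Feeding this into the first bound yields
\[
hom^{1}(\mathcal{H}^{-1}(C),\mathcal{H}^{-1}(C)) \leq d - 1 + (d-1) = 2d - 2,
\]
so Lemma~\ref{lem:1cy}(2) applies and $\mathcal{H}^{-1}(C)$ is $\sigma$-stable, hence so is $C$. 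The symmetric case is identical.

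The main obstacle I anticipate is the second step: the fact that both cohomologies vanishing is incompatible with $d \geq 2$ is not immediate from the hypotheses of Section~\ref{sec:minimal} alone and requires combining Serre duality with the non-obvious simplicity result (Lemma~\ref{lem:simple}). Everything else is a direct application of the numerical bounds from Lemma~\ref{lem:mainbound} balanced against the minimality property of $d$.
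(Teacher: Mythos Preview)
Your argument is correct and follows the same overall strategy as the paper: rule out ``both zero'' via simplicity of $E$, rule out ``both nonzero'' by summing the two bounds from Lemma~\ref{lem:mainbound} against the lower bound $d$, and then bound $hom^1$ of the surviving cohomology by $2d-2$ so that Lemma~\ref{lem:1cy}(2) gives stability. The only noteworthy difference is in the stability step. The paper observes that once one cohomology vanishes, the exact sequences~\eqref{eq:exact1}--\eqref{eq:exact2} force $im(f)\cong E$ (respectively $im(f)\cong SE[-1]$), so one may compute $\chi(E,im(f))=\chi(E,E)=1-d$ directly and plug into Lemma~\ref{lem:mainbound} to obtain $2d-2$. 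You instead extract $a-b\le d-1$ by applying the \emph{other} inequality of Lemma~\ref{lem:mainbound} to the zero cohomology object; this is slightly indirect but perfectly valid (and in fact gives the equality $a-b=d-1$, since the equality part of Lemma~\ref{lem:mainbound} reads $0=d-1-a+b$ when $\mathcal{H}^0(C)=0$). Either route lands on the same bound $hom^1\le 2d-2$.
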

\begin{proof} 
%We first observe that by Lemma~\ref{lem:1cy}, the objects $E, SE[-1]$ are $\sigma$-stable.
%
%By Lemma~\ref{lem:longsequence}, we have the equalities
%\begin{align*}
%Hom^1(E,\mathcal{H}^{-1}(C)) &= d - 1 + hom(E, im(f)) - hom^1(E, im(f)) \\
%Hom^1(\mathcal{H}^0(C), SE[-1]) &= d - 1 - hom(E, im(f)) + hom^1(E, im(f))
%\end{align*}
%
%We now claim the equalities
%\begin{align*}
%Hom^2(im(f), \mathcal{H}^{-1}(C)) &= 0\\
%Hom^2(\mathcal{H}^0(C) , im(f)) &= 0
%\end{align*}
%
%Applying $Hom(\cdot,\mathcal{H}^{-1}(C))$ to sequence~\eqref{eq:exact1} and $Hom(\mathcal{H}^0(C), \cdot)$ to sequence~\eqref{eq:exact2}, the claimed vanishings imply the following bounds
If both objects were zero, then $f$ must be an isomorphism. In particular, we must have $hom^1(E,E) = hom(E,SE[-1]) = hom(E,E) = 1$ by Lemma~\ref{lem:simple}, contradicting the assumption that $d \geq 2$.

If both objects were non-zero, then by Lemma~\ref{lem:mainbound}, we have the following inequalities
\begin{align*}
hom^1(\mathcal{H}^{-1}(C), \mathcal{H}^{-1}(C)) &\leq  d-1 + \chi(E,im(f))\\
hom^1(\mathcal{H}^{0}(C), \mathcal{H}^{0}(C)) &\leq  d-1 - \chi(E,im(f))
\end{align*}
The minimality of $d$ then implies that either $\mathcal{H}^{-1}(C)$ or $\mathcal{H}^0(C)$ must be trivial.

We may consider the case where $\mathcal{H}^{-1}(C) = 0$, the latter case follows from an identical argument. Sequence~\eqref{eq:exact2} then yields the following exact sequence in $\mathcal{A}$.
\begin{equation}\label{eq:exact3}
\begin{tikzcd}
0 \arrow[r] & E \arrow[r] & SE[-1] \arrow[r] & \mathcal{H}^0(C) \arrow[r] & 0
\end{tikzcd}
\end{equation}
%Applying $Hom(\mathcal{H}^0(C), \cdot)$, 
%\[
%\begin{tikzcd}
%&0 = Hom(\mathcal{H}^0(C),SE[-1]) \arrow[r]&Hom(\mathcal{H}^0(C), \mathcal{H}^0(C)) \arrow[lld] \\Hom^1(\mathcal{H}^0(C), E) \arrow[r]& Hom^1(\mathcal{H}^0(C), SE[-1]) \arrow[r] & Hom^1(\mathcal{H}^0(C),\mathcal{H}^0(C)) \arrow[lld] \\ Hom^2(\mathcal{H}^0(C),E) = 0 && 
%\end{tikzcd}
%\]
%\begin{equation}\label{eq:exact6}
%0 \rightarrow Hom(\mathcal{H}^0(C), \mathcal{H}^0(C)) \rightarrow Hom^1(\mathcal{H}^0(C), E) \rightarrow Hom^1(\mathcal{H}^0(C), SE[-1]) \rightarrow Hom^1(\mathcal{H}^0(C),\mathcal{H}^0(C)) \rightarrow 0 
%\end{equation}
%we first observe $Hom^2(\mathcal{H}^0(C), E) = 0$ by applying Definition~\ref{defn:noncurve} and Lemma~\ref{lem:boundC} to obtain the inequalities
%\[
%(\varphi(E) + 2) - \varphi^+(\mathcal{H}^0(C)) \geq (\varphi(E) + 2) - (\varphi_0 +1) > gldim(\sigma)
%\] 
By Lemma~\ref{lem:mainbound}, we have the inequality
\begin{equation}\label{eq:3}
hom^1(\mathcal{H}^0(C),\mathcal{H}^0(C)) \leq hom^1(\mathcal{H}^0(C),SE[-1]) =  d - 1 - \chi(E, E) = 2d-2
\end{equation}
%where the middle equality follows from equation~\eqref{eq:2} in the proof of Lemma~\ref{lem:restrictC}. 
In particular, this implies that $C = \mathcal{H}^0(C)$ is $\sigma$-stable by Lemma~\ref{lem:1cy}. 
\end{proof}
\subsection{Applications of global dimension bounds}
In this subsection, we prove Proposition~\ref{prop:1sphericalnew}. We begin by reformulating Definition~\ref{defn:noncurve} in terms of the quantity $\inf\limits_{\sigma \in Stab(\mathcal{T})}gldim(\sigma)$.

In the following, we assume that $\mathcal{T}$ satisfies $d \geq 1$ and $\inf\limits_{\sigma \in Stab(\mathcal{T})} gldim(\sigma) < \frac{6}{5}$. Fix a stability condition $\sigma = (\mathcal{P},Z)$ such that $gldim(\sigma) < \frac{6}{5}$ and an object $E \in \mathcal{T}$ with $hom^1(E,E) = d$. By Lemma~\ref{lem:1cy}, we again note that $E$ and $SE[-1]$ are semistable with respect to any stability condition $\sigma$ with $gldim(\sigma) < 2$. Consider the following phase
\begin{equation}\label{eq:phi0}
\varphi_0 \coloneqq \frac{1}{2}(\varphi(E) + \varphi(SE[-1])) - \frac{1}{2}
\end{equation}
and the heart $\mathcal{A} = \mathcal{P}(\varphi_0,\varphi_0+1]$. 

We first observe the following simple Lemma, which will be used repeatedly in the below.
\begin{lemma}\label{lem:sbound}
Let $F, SF[-1]\in \mathcal{T}$ be $\sigma$-semistable objects. Then the following inequalities hold.
\[0\leq\varphi(SF[-1]) - \varphi(F) \leq gldim(\sigma)-1
\]
\end{lemma}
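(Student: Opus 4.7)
The plan is to deduce both inequalities by combining Serre duality with the definition of $gldim(\sigma)$ and with the vanishing axiom of the slicing $\mathcal{P}_\sigma$: namely, that $Hom(E_1,E_2)=0$ whenever $E_i \in \mathcal{P}_\sigma(\varphi_i)$ are semistable with $\varphi_1 > \varphi_2$.

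For the upper bound, I would start from the fact that $\mathrm{id}_F$ is a nonzero element of $Hom(F,F)$. Serre duality provides the identification $Hom(F,F) \cong Hom(F, SF)^{\vee}$, so $Hom(F, SF) \neq 0$. Since $SF[-1]$ is assumed $\sigma$-semistable of phase $\varphi(SF[-1])$, its shift $SF$ is $\sigma$-semistable of phase $\varphi(SF[-1])+1$. Plugging this nonvanishing into the definition of $gldim(\sigma)$ yields
$$\varphi(SF[-1]) + 1 - \varphi(F) \leq gldim(\sigma),$$
which rearranges to the claimed upper bound.

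For the lower bound, I would first note that the standing hypotheses $d \geq 1$ and $gldim(\sigma) < \tfrac{6}{5} < 2$ place us in the setup of Section~\ref{generalprop}: in particular, $d \geq 1$ forces $\mathcal{T}$ to be non-rational. Invoking Lemma~\ref{lem:semistable}(3) for the $\sigma$-semistable object $F$ therefore gives $Hom(F, SF[-1]) \neq 0$. The slicing vanishing axiom recalled above then forces $\varphi(F) \leq \varphi(SF[-1])$, which is the lower bound.

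There is no genuine obstacle here: the argument is a short bookkeeping exercise with Serre duality and the slicing axiom. The only point worth flagging is an asymmetry between the two bounds: the upper bound is a purely formal consequence of Serre duality applied to $\mathrm{id}_F$, whereas the lower bound genuinely uses the global hypothesis on $\mathcal{T}$ (non-rationality, via Lemma~\ref{lem:semistable}(3)) to produce the nonvanishing $Hom(F, SF[-1])\neq 0$.
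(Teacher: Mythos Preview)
Your proof is correct and matches the paper's argument almost exactly. The only minor difference is in the lower bound: the paper observes directly via Serre duality that $hom(F,SF[-1]) = hom^1(F,F) \geq d \geq 1$, rather than routing through Lemma~\ref{lem:semistable}(3); this makes your remark about the ``asymmetry'' slightly overstated, since both bounds reduce to Serre duality plus a single nonvanishing hypothesis ($\mathrm{id}_F \neq 0$ for the upper, $d\geq 1$ for the lower).
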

\begin{proof}
The first inequality follows immediately from $hom(F,SF[-1]) = hom^1(F,F) \geq 1$ by the assumption on $d$. The second follows from $hom(F,SF) = hom(F,F) \neq 0$ and the definition of the global dimension.
\end{proof}
We now demonstrate that the results of section~\ref{sec:minimal} apply automatically.
\begin{lemma}\label{lem:65noncurve}
$\mathcal{T}$ is a non-commutative curve with respect to $E$ and the heart $\mathcal{A}= \mathcal{P}(\varphi_0,\varphi_0+1]$ in the sense of Definition~\ref{defn:noncurve}.
\end{lemma}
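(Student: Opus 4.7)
The plan is to directly verify the two conditions of Definition~\ref{defn:noncurve} for $E$, the heart $\mathcal{A} = \mathcal{P}(\varphi_0, \varphi_0 + 1]$, and the chosen stability condition $\sigma$. Both $E$ and $SE[-1]$ are $\sigma$-semistable (as recorded just before the statement, via Lemma~\ref{lem:1cy}), so their phases are well-defined. The key input will be Lemma~\ref{lem:sbound}, which combined with $gldim(\sigma) < \tfrac{6}{5}$ yields
\begin{equation*}
0 \leq \varphi(SE[-1]) - \varphi(E) \leq gldim(\sigma) - 1 < \tfrac{1}{5}.
\end{equation*}

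First I would verify Definition~\ref{defn:noncurve}(1), namely that $E, SE[-1] \in \mathcal{A}$. The choice~\eqref{eq:phi0} of $\varphi_0$ is symmetric in $\varphi(E)$ and $\varphi(SE[-1])$: the midpoint $\varphi_0 + \tfrac{1}{2}$ lies exactly halfway between the two phases. Since these differ by less than $\tfrac{1}{5}$, both lie within $\tfrac{1}{10}$ of the midpoint, hence in the half-open interval $(\varphi_0, \varphi_0 + 1]$, giving containment in $\mathcal{A}$.

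Next I would verify Definition~\ref{defn:noncurve}(2). The symmetry of~\eqref{eq:phi0} collapses the two required inequalities into one: a direct substitution gives
\begin{equation*}
(\varphi_0 + 2) - \varphi(SE[-1]) \;=\; (\varphi(E) + 2) - (\varphi_0 + 1) \;=\; \tfrac{3}{2} - \tfrac{1}{2}\bigl(\varphi(SE[-1]) - \varphi(E)\bigr).
\end{equation*}
Applying Lemma~\ref{lem:sbound} again, this quantity is bounded below by $2 - \tfrac{1}{2}gldim(\sigma)$. The required strict inequality $2 - \tfrac{1}{2}gldim(\sigma) > gldim(\sigma)$ is equivalent to $gldim(\sigma) < \tfrac{4}{3}$, which holds since $gldim(\sigma) < \tfrac{6}{5}$.

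There is no genuine obstacle here; the lemma is a routine numerical verification. The only conceptual point worth isolating is that the definition~\eqref{eq:phi0} was engineered precisely so that the two inequalities of Definition~\ref{defn:noncurve}(2) coincide, reducing the whole verification to one elementary inequality. Notably the hypothesis $gldim(\sigma) < \tfrac{6}{5}$ provides room to spare at this step — any bound strictly less than $\tfrac{4}{3}$ would do — which is consistent with the author's comment that the main theorem might extend up to the threshold $\tfrac{4}{3}$.
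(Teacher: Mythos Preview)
Your proposal is correct and follows essentially the same approach as the paper: both arguments use Lemma~\ref{lem:sbound} to bound $\varphi(SE[-1]) - \varphi(E)$ and then substitute the definition~\eqref{eq:phi0} to verify the two conditions of Definition~\ref{defn:noncurve}. Your observation that the symmetry in~\eqref{eq:phi0} makes the two inequalities of Definition~\ref{defn:noncurve}(2) coincide is a nice streamlining of the paper's presentation (which treats them separately but by an ``identical argument''), and your remark that only $gldim(\sigma) < \tfrac{4}{3}$ is needed here matches the paper's own Remark following the lemma.
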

\begin{proof}
It suffices to show that the heart $\mathcal{A} = \mathcal{P}(\varphi_0, \varphi_0+1]$ satisfies the conditions in Definition~\ref{defn:noncurve}.

We first prove that $E,SE[-1] \in \mathcal{A}$. It suffices to prove the inequalities 
\[\varphi_0 <\varphi(E) \leq \varphi(SE[-1]) \leq \varphi_0+1
\]
Applying Lemma~\ref{lem:sbound}, we must have
\begin{equation}\label{eq:sbound}
\varphi(E) \leq \varphi(SE[-1]) \leq \varphi(E) + \frac{1}{5}
\end{equation}
Applying the inequality~\eqref{eq:sbound} to Definition~\ref{eq:phi0}, we obtain
\begin{equation}\label{eq:chain}
\varphi_0 \leq \varphi(E) - \frac{2}{5} < \varphi(E) \leq \varphi(SE[-1]) \leq \varphi(E) + \frac{1}{2} = \frac{1}{2}(\varphi(E) + \varphi(E)) + \frac{1}{2} \leq \varphi_0 + 1
\end{equation}
which implies the claim.

Finally, we verify the inequalities:
\begin{equation}\label{eq:noncurveineq}
(\varphi_0 + 2) - \varphi(SE[-1]) > gldim(\sigma),\qquad (\varphi(E) + 2) - (\varphi_0 + 1) > gldim(\sigma)
\end{equation}
The first follows from the inequalities
\begin{align*}
(\frac{1}{2}(\varphi(E) + \varphi(SE[-1])) + \frac{3}{2})- \varphi(SE[-1]) &= \frac{1}{2}(\varphi(E) - \varphi(SE[-1])) + \frac{3}{2} \\
& \geq \frac{1}{2}(-\frac{1}{5}) + \frac{3}{2} \geq \frac{6}{5} > gldim(\sigma)
\end{align*}
where we used the second inequality in~\eqref{eq:sbound} in the second line. The second inequality in~\eqref{eq:noncurveineq} follows from an identical argument.
\end{proof}
\begin{rem}
We note that Lemma~\ref{lem:65noncurve} applies with an identical argument if $gldim(\sigma) < \frac{4}{3}$.
\end{rem}
Finally, with the stronger bound on the global dimension, we now prove Proposition~\ref{prop:1sphericalnew} by applying Lemmas~\ref{lem:1spherical} and~\ref{lem:restrictC}.
\begin{proof}[Proof of Proposition~\ref{prop:1sphericalnew}]
We will assume that $d \geq 2$. Fix a stability condition $\sigma = (\mathcal{P},Z)$ such that $gldim(\sigma) < \frac{6}{5}$, an object $E$ with $hom^1(E,E) = d$, and heart $\mathcal{A} = \mathcal{P}(\varphi_0,\varphi_0+1]$ with $\varphi_0$ given in~\eqref{eq:phi0}.

Consider the following sequence, where we recall $E,SE[-1] \in \mathcal{A}$ by Lemma~\ref{lem:65noncurve}.
\begin{equation}\label{eq:exact6}
\begin{tikzcd}
E \arrow[r] & SE[-1] \arrow[r] & C 
\end{tikzcd}
\end{equation}
By Lemma~\ref{lem:restrictC}, we may also assume that $C \in \mathcal{A}$ and is $\sigma$-stable. The case with $C \in \mathcal{A}[1]$ will follow by an identical, dual argument.

We define the rotated slicing $\mathcal{A}' \coloneqq \mathcal{P}(\varphi_0+\frac{1}{5}, \varphi_0 + \frac{6}{5}]$. We first observe the inclusion $E,SE[-1],C \in \mathcal{A}'$. Indeed, we have the following chain of inequalities
\[
\varphi_0 + \frac{1}{5} \leq \varphi(E) - \frac{1}{5} < \varphi(E) \leq \varphi(SE[-1]) \leq \varphi(C) \leq \varphi_0 + 1 \leq \varphi_0 + \frac{6}{5}
\]
where the first follows from the first inequality in the chain~\eqref{eq:chain}, the third and fourth by Lemma~\ref{lem:boundC}, and the fifth from the inclusion $C \in \mathcal{A}$.

Applying $Hom(E,\cdot)$ to sequence~\eqref{eq:exact6}, and using that $hom(E,E) =1$ by Lemma~\ref{lem:simple}, it must be the case that $Hom(E,C) \neq 0$. We claim that this implies that $hom(E, SC[-1]) = hom^1(C, E) \geq d $. Indeed, let $g \in Hom(E,C)$ be non-zero. This yields the following exact sequences in $\mathcal{A}'$:
\begin{equation}\label{eq:exact5}
\begin{tikzcd}
0 \arrow[r]& ker(g) \arrow[r] & E \arrow[r] & im(g) \arrow[r] & 0
\end{tikzcd}
\end{equation}
\begin{equation}\label{eq:exact4}
\begin{tikzcd}
0 \arrow[r]& im(g) \arrow[r] & C\arrow[r] & coker(g) \arrow[r] & 0
\end{tikzcd}
\end{equation}
We apply $Hom(im(g), \cdot)$ to the Serre dual of sequence~\eqref{eq:exact4}
\[
\begin{tikzcd}
S(coker(g))[-2] \arrow[r] & S(im(g))[-1] \arrow[r] & SC[-1] \arrow[r] & S(coker(g))[-1] 
\end{tikzcd}
\]
and observe that $Hom(im(g),S(coker(g))[-2]) = Hom(coker(g),im(g)[2]) = 0$ from the inequalities 
\begin{equation}\label{eq:a'bound}
(\varphi^-(im(g)) + 2) - \varphi^+(coker(g)) \geq (\varphi(E) + 2) - (\varphi_0 + \frac{6}{5}) > gldim(\sigma)
\end{equation}
where $\varphi(E) \leq \varphi^-(im(g))$ from \cite[Lemma 3.4]{2002math.....12237B} and $\varphi^+(coker(g)) \leq \varphi_0 + \frac{6}{5}$ as $coker(g) \in \mathcal{A}'$. The second inequality in~\eqref{eq:a'bound} then follows from the inequalities:
\begin{align*}
(\varphi(E) + 2) - (\varphi_0 + \frac{6}{5}) &= \frac{1}{2}(\varphi(E) - \varphi(SE[-1])) + 2 - \frac{7}{10}\\
&\geq \frac{1}{2}(-\frac{1}{5}) + 2 - \frac{7}{10}  = \frac{6}{5} > gldim(\sigma) 
\end{align*}
where the second line follows from Lemma~\ref{lem:sbound}.
Thus, we have $d \leq Hom(im(g), S(im(g))[-1]) \hookrightarrow Hom(im(g), SC[-1])$, and so $d \leq Hom(im(g), SC[-1])$. As $C, SC[-1]$ are $\sigma$-stable by Lemma~\ref{lem:restrictC}, we apply Lemma~\ref{lem:sbound} and obtain $0 \leq\varphi(SC[-1]) - \varphi(C) \leq \frac{1}{5}$, and hence $SC[-1] \in \mathcal{A}'$. The claim follows by applying $Hom(\cdot, SC[-1])$ to the surjection $E \twoheadrightarrow im(g)$ in sequence~\eqref{eq:exact5}. 

Altogether, we obtain the inequalities
\begin{align*}
hom^1(C,C) &= hom^1(C, SE[-1]) - hom^1(C,E) + hom(C,C) \\
&\leq (2d -2) - d + hom(C,C)\\
&= d - 2 + hom(C,C)
\end{align*}
where the first follows by applying $Hom(C, \cdot)$ to sequence~\eqref{eq:exact6} and the vanishings $Hom(C,SE[-1])= 0$ and $Hom^2(C,E) = 0$ which follow from the proof of Lemma~\ref{lem:mainbound}. The second follows from Lemma~\ref{lem:mainbound} applied to sequence~\ref{eq:exact6} and the above paragraph. On the other hand, as $C$ is $\sigma$-stable, we obtain $hom^1(C,C) \leq d-1$, contradicting the minimality of $d$. Thus, it must be the case that $d = 1$.

Finally, assume $F \in \mathcal{T}$ satisfies $hom^1(F,F) = d =1$. By Lemma~\ref{lem:1cy}, the objects $F, SF[-1]$ are $\sigma$-semistable. By Lemma~\ref{lem:65noncurve}, there exists a heart $\mathcal{A}$ with $F, SF[-1] \in \mathcal{A}$ satisfying Definition~\ref{defn:noncurve}. By Lemma~\ref{lem:1spherical}, $F$ must be a $1$-spherical object and we conclude.
\end{proof}

\section{Reconstruction from a $1$-Calabi-Yau object}\label{1cyreconstruction}
Let $\mathcal{D}$ be a triangulated category of finite type over $k$. We recall that $\mathcal{D}$ is a {\em geometric non-commutative scheme} if there exists an admissible embedding $\mathcal{D} \xhookrightarrow{} D^b(X)$ with $X$ a smooth, projective variety over $k$. In this subsection, we prove the following reconstruction result.
\begin{prop}\label{prop:reconstructionA}
Assume that $\mathcal{D}$ is a non-rational, geometric noncommutative scheme with a numerical stability condition $\sigma = (\mathcal{A},Z)$ such that $gldim(\sigma) <2$ and $Im(Z) \subset \mathbb{C}$ is discrete. If there exists an object $E \in \mathcal{D}$ such that $Hom^1(E,E) = k$, then there exists an admissible embedding $\Phi \colon D^b(C) \xhookrightarrow{} \mathcal{D}$ where $C$ is a smooth projective curve.

%If instead, there exists a $1$-Calabi-Yau object and $\sigma$ is Serre-invariant, then the same conclusion holds. In this case, if $\mathcal{D}$ is also connected, then $\Phi$ is an equivalence of categories.
In addition, if $\mathcal{D}$ is connected and if every $\sigma$-stable object $E$ satisfying $Hom^1(E,E) = k$ is $1$-spherical, then $\Phi$ is an equivalence of categories.
\end{prop}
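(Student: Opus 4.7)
The plan is to locate a smooth projective curve $C$ inside the moduli space of $\sigma$-stable objects of class $v(E)$, build a Fourier--Mukai transform $\Phi \colon D^b(C) \to \mathcal{D}$ from a (quasi-)universal family, verify full faithfulness via a Bondal--Orlov criterion, and then deduce the equivalence in the $1$-spherical setting by an orthogonality argument powered by connectedness of $\mathcal{D}$.

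Since $\mathcal{D}$ is non-rational, Lemma~\ref{lem:1cy}(1) forces $d \geq 1$, and the existence of $E$ gives $d = 1$. Lemma~\ref{lem:1cy}(3) then guarantees that $E$ is $\sigma$-semistable with $v(E)^2 = 0$, and by Lemma~\ref{lem:semistable}(2) a Jordan--H\"older factor provides a $\sigma$-stable representative $E$ with $\mathrm{Ext}^{\ast}(E,E) \simeq k \oplus k[-1]$, the higher vanishings coming from $\mathrm{gldim}(\sigma) < 2$ together with Serre duality. Numericality of $\sigma$ and discreteness of $\mathrm{Im}(Z)$ put us in the setting of \cite{macri2019lectures,2021}, producing a good moduli space $M \coloneqq M_\sigma(v(E))$ of $\sigma$-semistable objects. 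The bound $\mathrm{gldim}(\sigma) < 2$ kills obstructions at $[E]$ and yields a tangent space of dimension $\mathrm{hom}^1(E,E) = 1$, so the component of $M$ containing $[E]$ is a smooth projective curve $C$.

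Via the admissible embedding $\mathcal{D} \hookrightarrow D^b(X)$ one obtains a (quasi-)universal family $\mathcal{E}$ on $C \times X$ with fibres in $\mathcal{D}$, and hence a Fourier--Mukai transform $\Phi \colon D^b(C) \to \mathcal{D}$ with $\Phi(k_x) \simeq E_x$. To apply Bondal--Orlov, I would verify that for $x \neq y$ the objects $E_x, E_y$ are non-isomorphic $\sigma$-stable objects of the same phase, so $\mathrm{Hom}(E_x, E_y) = 0$ by stability, $\mathrm{Ext}^1(E_x, E_y) = 0$ by Serre duality reducing to the previous vanishing, and $\mathrm{Ext}^i$ vanishes for $i \geq 2$ by $\mathrm{gldim}(\sigma) < 2$; while for $x = y$, the Ext algebra matches that of $k_x$ on $C$ by the earlier computation. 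Full faithfulness of $\Phi$ then follows, and since $D^b(C)$ is saturated and $\mathcal{D}$ embeds admissibly in $D^b(X)$, admissibility of $\Phi$ is automatic by Bondal--Van den Bergh.

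For the equivalence under the additional hypotheses, suppose $\mathcal{D} = \langle \mathcal{D}', \Phi(D^b(C)) \rangle$ with $\mathcal{D}' \neq 0$. Connectedness of $\mathcal{D}$ rules out complete orthogonality of the two factors, so some $E_x$ maps nonzero into an object of $\mathcal{D}'$ in some degree. Via iterated HN and JH reductions, controlled by Lemma~\ref{lem:semistable} and Lemma~\ref{lem:wml} together with minimality of $d = 1$ and Lemma~\ref{lem:1cy}(3), the goal is to extract a $\sigma$-stable object $G \in \mathcal{D}$ with $\mathrm{hom}^1(G,G) = 1$ whose class lies outside $\Phi(D^b(C))$. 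By hypothesis $G$ is $1$-spherical, hence Serre-invariant up to shift, so $G$ defines a point of $M$, and one argues that the component containing $[G]$ must coincide with $C$, forcing $G \in \Phi(D^b(C))$, a contradiction. The main obstacle is precisely this global control of $M$: proving that every $1$-spherical object in $\mathcal{D}$ actually lies in $\Phi(D^b(C))$ is where the full strength of the $1$-spherical hypothesis and the connectedness of $\mathcal{D}$ must be deployed, perhaps by showing that the locus of $1$-spherical objects in $M$ forms a single connected smooth projective curve.
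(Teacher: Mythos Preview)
Your overall architecture for the first part matches the paper's, but there are two genuine gaps.

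First, passing to a single Jordan--H\"older factor of $E$ does not guarantee that every $\sigma$-semistable object of that class is $\sigma$-stable; there could be strictly semistable objects of class $v(E')$ for your chosen factor $E'$. The paper handles this with a separate descent argument (Lemma~\ref{lem:conditionsreconstruct}): if some object of class $v$ with $v^2=0$ is strictly semistable, replace $v$ by the class of one of its stable factors (still squaring to zero by Lemma~\ref{lem:semistable}(2)) and repeat; discreteness of $\mathrm{Im}(Z)$ forces termination. Only then do you get $\mathcal{M}_\sigma(v)=\mathcal{M}_\sigma^{st}(v)$, which is what Theorem~\ref{thm:stack} and Corollary~\ref{cor:mprojcurve} need.

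Second, your $\mathrm{Ext}^1$ vanishing for $x\neq y$ does not follow from Serre duality as stated: $\mathrm{Hom}^1(E_x,E_y)^\vee \cong \mathrm{Hom}(E_y,SE_x)$, and you have no control over $SE_x$ in the first part of the proposition. The correct argument is numerical: $\mathrm{Hom}(E_x,E_y)=\mathrm{Hom}(E_y,E_x)=0$ by stability, $\mathrm{Hom}^i$ vanishes for $i\neq 0,1$ by $\mathrm{gldim}(\sigma)<2$ and equality of phases, and then $\chi(E_x,E_y)=-v^2=0$ forces $\mathrm{hom}^1(E_x,E_y)=0$.

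For the equivalence, your proposed route is both incomplete (as you acknowledge) and structurally problematic: the object $G$ you hope to extract need not have class $v$, so there is no reason it defines a point of $M_\sigma(v)$, let alone of the component $C$. The paper's argument is much shorter and avoids all of this. Once every $\sigma$-stable object of class $v$ is $1$-spherical, the universal family $\mathcal{E}$ is a family of $1$-Calabi--Yau objects; this forces the left and right adjoints of $\Phi_{\mathcal{E}}$ to agree up to shift, so the semi-orthogonal complement $\Phi(D^b(C))^\perp$ is in fact a two-sided orthogonal complement. Connectedness of $\mathcal{D}$ then kills it. This is exactly the mechanism in the cited result of Bridgeland, and it sidesteps any need to locate specific stable objects in the complement.
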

In this subsection, we will always assume $\mathcal{D} \xhookrightarrow{} D^b(X)$ a geometric noncommutative scheme and that all Bridgeland stability conditions are numerical, i.e. that the central charge factorizes through $K_{num}(\mathcal{D})$, the numerical Grothendieck group. We first recall the basic definitions and notions of the moduli spaces of objects in $\mathcal{D}$ following \cite[Section 5.3]{macri2019lectures} and we defer to \cite[Part II]{2021} for the analogous definitions in greater generality and for the fundamental properties.
%\begin{lemma}\label{lem:geomreconstruction}
%Assume that the assumptions of Lemma~\ref{lem:conditionsreconstruct} hold on $\mathcal{D}$. Then there exists an equivalence $\mathcal{D} \simeq D^b(C)$ with $C$ a smooth projective curve.
%\end{lemma}

Given a scheme $B$, locally of finite type over $k$, we define
\[
\mathcal{D}_{qcoh} \boxtimes D_{qcoh}(B) \subset D_{qcoh}(X\times B)
\]
to be the smallest triangulated subcategory in the unbounded derived category of quasi-coherent sheaves on $X\times B$ closed under direct sums and containing $\mathcal{D} \boxtimes D^b(B)$. 
\begin{defn}
An object $E\in D_{qcoh}(X\times B)$ is \textit{B-perfect} if, locally over $B$, it is isomorphic to a bounded complex of quasi-coherent sheaves on $B$ that is flat and of finite presentation.
\end{defn}
\noindent
Let $D_{B-perf}(X \times B)$ be the full subcategory of $B$-perfect complexes in $D_{qcoh}(X\times B)$ and consider the following restriction to $\mathcal{D}$.
\[
\mathcal{D}_{B-perf} \coloneqq (\mathcal{D}_{qcoh} \boxtimes D_{qcoh}(B)) \cap D_{B-perf}(X \times B)
\]
Let $\mathcal{M} \colon (Sch/k)^{op} \rightarrow Grp$ be the $2$-functor defined as follows for $B$, locally of finite type.
\[
\mathcal{M}(B) \coloneqq \bigg\{ E\in \mathcal{D}_{B-perf} \colon \begin{array}{l} Ext^i(E|_{X \times \{b \}}, E|_{X \times \{b\}}) = 0,\text{ for all }i < 0 \\ \text{ and all geometric points }b \in B \end{array}\bigg\}
\]
Given a stability condition $\sigma \in \mathcal{D}$, we denote by $\mathcal{M}_\sigma(v)$ (resp. $\mathcal{M}_\sigma^{st}(v)$) the substack of $\mathcal{M}$ parametrizing $\sigma$-semistable (resp. $\sigma$-stable) objects in $\mathcal{D}$ of class $v$.

The following Theorem summarizes the relevant properties that we will need for these objects.
%We denote by $\mathcal{M}_{pug}(\mathcal{D}) \colon (Sch/k)^{op} \rightarrow Grp$ the functor where
%\[
%\mathcal{M}_{pug}(\mathcal{D})(T) = \{ E \in D_{pug}(X_T /T) \colon E_t \in \mathcal{D}_t\text{ for all }t \in T \}
%\]
%where $\mathcal{D}_t$ denotes the base change category as defined in \cite{Kuznetsov_2011}. 
\begin{theorem}[{\cite[Theorem 21.24]{2021}}, {\cite[Theorem A.5]{muk87}}]\label{thm:stack}
Fix a class $v \in K_{num}(\mathcal{D})$ such that $\mathcal{M}_\sigma(v) = \mathcal{M}_\sigma^{st}(v)$. Then $\mathcal{M}_\sigma(v)$ is an algebraic stack, the coarse moduli space $M_\sigma(v)$ is a proper algebraic space, and $\mathcal{M}_\sigma(v)$ is a $\mathbb{G}_m$-gerbe over $M_\sigma(v)$. In particular, there exists a quasi-universal family in $\mathcal{D}_{B-perf}$. 
\end{theorem}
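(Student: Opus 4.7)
The plan is to assemble four ingredients: (i) algebraicity of the ambient stack $\mathcal{M}$, (ii) openness of the $\sigma$-semistable locus, (iii) the $\mathbb{G}_m$-gerbe structure, and (iv) properness of the coarse moduli space. The existence of a quasi-universal family will then follow formally from (iii). Among these, (iv) is where the technical difficulty concentrates.

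For (i), I would leverage the admissible embedding $\mathcal{D}\hookrightarrow D^b(X)$ and reduce to Lieblich's representability theorem for the stack of $B$-perfect complexes on $X\times B$ satisfying $\mathrm{Ext}^{<0}(E,E)=0$, which is an algebraic stack locally of finite presentation over $k$. The restriction to $\mathcal{D}$ is imposed by requiring that $E$ lie in the admissible subcategory $\mathcal{D}_{B-perf}$, cut out by an open condition using the base-changed projection functor associated with the admissible embedding. For (ii), I would invoke Bridgeland's theorem on openness of $\sigma$-semistability in families; its applicability here rests on boundedness of $\sigma$-semistable objects of class $v$, which is in turn provided by the discreteness hypothesis on $\mathrm{Im}(Z)$ together with the numerical condition on $\sigma$.

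For (iii), the hypothesis $\mathcal{M}_\sigma(v) = \mathcal{M}_\sigma^{st}(v)$ guarantees that every geometric point of $\mathcal{M}_\sigma(v)$ is represented by a $\sigma$-stable object $E$, which is Jordan--H\"older simple in the abelian slice $\mathcal{P}_\sigma(\varphi)$. Combined with the $\mathrm{Ext}^{<0}$-vanishing defining $\mathcal{M}$, this forces $\mathrm{Hom}(E,E)=k$ at every geometric point, so the inertia of $\mathcal{M}_\sigma(v)$ is canonically identified with the constant group scheme $\mathbb{G}_m$. The rigidification theorem of Abramovich--Olsson--Vistoli then produces a $\mathbb{G}_m$-gerbe morphism $\mathcal{M}_\sigma(v)\to M_\sigma(v)$ with $M_\sigma(v)$ an algebraic space, which serves as the coarse moduli space.

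The main obstacle is (iv). Separatedness follows from the valuative criterion: two extensions of a family over a DVR that agree on the generic fiber are compared via a $\mathrm{Hom}$-sheaf which is locally free of rank one by simplicity — here the $\mathrm{Ext}^{<0}$-vanishing ensures the sheaf is genuinely a line bundle — yielding an isomorphism on the special fiber. For completeness I would adapt Langton's argument: given a family over a punctured DVR spectrum, first extend to a flat family of objects in $\mathcal{D}_{B-perf}$, and then, if semistability fails on the closed fiber, repeatedly modify along the maximal destabilizing subobject in the slicing $\mathcal{P}_\sigma$. Termination is guaranteed by the discreteness of $\mathrm{Im}(Z)$. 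The technical hurdle is maintaining $B$-perfectness and membership in $\mathcal{D}$ throughout these modifications, which requires controlling the interaction between the heart of $\sigma$ and the t-structure on $\mathcal{D}_{qcoh}\boxtimes D_{qcoh}(B)$. Once properness is established, a quasi-universal family is obtained by pulling back the tautological object along an \'etale atlas of $M_\sigma(v)$ on which the $\mathbb{G}_m$-gerbe trivializes, producing an object in $\mathcal{D}_{B-perf}$ whose fiber over each point is a direct sum of copies of the parametrized stable object.
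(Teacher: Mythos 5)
The paper gives no proof of this statement: Theorem~\ref{thm:stack} is imported verbatim from \cite[Theorem 21.24]{2021} and \cite[Theorem A.5]{muk87}, so there is nothing internal to compare your argument against. Your outline does, however, track the architecture of the cited proofs quite faithfully: algebraicity via Lieblich's representability for the stack of relatively perfect complexes with $\mathrm{Ext}^{<0}$-vanishing, cut down to $\mathcal{D}$ by the (open and closed) condition defined through the projection functor of the admissible embedding; openness of semistability; rigidification of the $\mathbb{G}_m$-inertia to get the gerbe; and a Langton-type elementary-modification argument for the valuative criterion. Two points deserve flagging. First, you attribute boundedness of the $\sigma$-semistable objects of class $v$ to the discreteness of $\mathrm{Im}(Z)$. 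Discreteness gives local finiteness of the slicing and the termination of the Langton algorithm, but boundedness (the objects forming a bounded family in $D^b(X)$) is a genuinely separate and substantive input; in \cite{2021} it appears as a standing hypothesis on the stability condition, verified there using the geometricity of $\mathcal{D}$, and it does not follow formally from discreteness of the central charge. Since the paper itself suppresses this hypothesis when quoting the theorem, this is not a defect relative to the paper, but your sketch presents it as automatic when it is not.

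Second, your construction of the quasi-universal family is not quite complete. Pulling back the tautological object along an \'etale atlas on which the gerbe trivializes only produces \emph{local} universal families over pieces of $M_\sigma(v)$; these do not glue to an object over $M_\sigma(v)$ itself unless the Brauer class of the gerbe is trivial. The content of \cite[Theorem A.5]{muk87} is precisely the descent step: one tensors the twisted universal object on the gerbe by a twisted locally free sheaf of finite positive rank (which exists because the gerbe class is torsion, killed by such a rank), and the result descends to $M_\sigma(v)\times X$ with fibers $E^{\oplus\rho}$ for a fixed $\rho$. With that correction, and with boundedness either assumed or established separately, your outline is a reasonable reconstruction of the external results the paper relies on.
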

The critical ingredient in the proof of Proposition~\ref{prop:reconstructionA} is to identify a suitable curve in the moduli space of $\sigma$-semistable objects in $\mathcal{D}$. By imposing a bound on the global dimension of the stability condition, we see that $M_\sigma(v)$ is particularly simple.
\begin{corollary}\label{cor:mprojcurve}
Assume that $\mathcal{D}$ admits a stability condition $\sigma$ such that $gldim(\sigma) < 2$. Assume that there exists a $\sigma$-semistable object $E$ of class $v$ such that $v^2 = 0$ and $\mathcal{M}_\sigma(v) = \mathcal{M}_\sigma^{st}(v)$. Then $M_\sigma(v)$ is a smooth projective curve. 
\end{corollary}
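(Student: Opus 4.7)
The plan is to use the global dimension bound to control deformations of $\sigma$-stable objects of class $v$, and combine this with Theorem~\ref{thm:stack} to promote $M_\sigma(v)$ to a smooth projective curve.

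First, I would fix an arbitrary closed point $[F]\in M_\sigma(v)$. The hypothesis $\mathcal{M}_\sigma(v)=\mathcal{M}_\sigma^{st}(v)$ forces $F$ to be $\sigma$-stable, so $hom(F,F)=1$. The tangent space at $[F]$ is $Ext^1(F,F)$, and obstructions lie in $Ext^2(F,F)$. The core of the argument is thus the computation of these groups, which I would carry out using only the bound $gldim(\sigma)<2$ together with the semistability of each shift $F[i]$.

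The key step is to observe the vanishings $Ext^i(F,F)=0$ for all $i\notin\{0,1\}$. For $i<0$, this is the standard consequence of Bridgeland semistability: a nonzero morphism between semistable objects forces the phase of the source to be no larger than that of the target, which fails for $F$ and $F[i]$ since $\varphi(F[i])=\varphi(F)+i<\varphi(F)$. For $i\geq 2$, the objects $F$ and $F[i]$ are semistable with phase difference $i\geq 2>gldim(\sigma)$, so the definition of the global dimension immediately yields $Hom(F,F[i])=0$. With these vanishings in hand, the Euler characteristic collapses to
\[
\chi(F,F)=hom(F,F)-hom^1(F,F)=1-hom^1(F,F),
\]
and the hypothesis $v^2=-\chi(F,F)=0$ forces $hom^1(F,F)=1$.

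Putting this together, $M_\sigma(v)$ has trivial obstructions and a $1$-dimensional tangent space at every closed point, so by Theorem~\ref{thm:stack} it is a smooth proper algebraic space of pure dimension $1$. I would then conclude by invoking the standard fact that a smooth proper algebraic space of dimension $1$ is a smooth projective curve: every one-dimensional separated algebraic space of finite type over $k$ is a scheme, and smooth proper curves over $k$ are projective. The one place where the hypothesis $v^2=0$ is indispensable is in pinning down the dimension to be exactly $1$ rather than just an upper bound; beyond that, the argument is routine deformation theory enabled by the global dimension constraint, and I do not anticipate any serious obstacle.
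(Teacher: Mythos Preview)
Your proposal is correct and follows essentially the same route as the paper: use $\sigma$-stability to get $hom(F,F)=1$, the bound $gldim(\sigma)<2$ (together with semistability) to kill $Hom(F,F[i])$ for $i\notin\{0,1\}$, and $v^2=0$ to force $hom^1(F,F)=1$, then invoke Theorem~\ref{thm:stack} for properness and the fact that a smooth proper one-dimensional algebraic space is a projective curve. The paper's proof is the same argument, only more tersely stated and citing \cite[\href{https://stacks.math.columbia.edu/tag/0A26}{Tag 0A26}]{stacks-project} for the final step.
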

\begin{proof}
By assumption, we have that $\chi(v,v) = 0$. As every $\sigma$-semistable object of class $v$ is $\sigma$-stable by assumption, we have that $Hom(F,F[i]) = k$ for $i = 0,1$ and vanishes otherwise for any $\sigma$-semistable object $F$ of class $v$ from the assumption that $gldim(\sigma) < 2$.

The above implies in particular, that $Hom(F,F[1]) = k$ and $Hom(F,F[2]) = 0$ and hence $M_\sigma(v)$ is of dimension $1$ and is smooth, respectively. Thus, $M_\sigma(v)$ is a smooth and proper algebraic curve, and so is automatically projective by \cite[\href{https://stacks.math.columbia.edu/tag/0A26}{Tag 0A26}]{stacks-project}.
\end{proof}
In order to fully utilize Theorem~\ref{thm:stack}, we will need to find a suitable class $v \in K_{num}(\mathcal{D})$ such that any $\sigma$-semistable object of class $v$ is also $\sigma$-stable. Under mild conditions on the stability condition, this is indeed the case.
\begin{lemma}\label{lem:conditionsreconstruct}
Assume that there exists a stability condition $\sigma = (\mathcal{A}, Z)\in Stab(\mathcal{D})$ such that $gldim(\sigma) < 2$ and $Im(Z) \subset \mathbb{C}$ is discrete. Assume that there exists a $\sigma$-semistable object $E$ such that $v(E)^2 = 0$. Then there exists a class $v \in K_{num}(\mathcal{D})$ such that $v^2 = 0$ and $\mathcal{M}_\sigma(v) = \mathcal{M}_\sigma^{st}(v) \neq 0$.
\end{lemma}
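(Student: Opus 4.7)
\textbf{Proof plan for Lemma~\ref{lem:conditionsreconstruct}.} The strategy is to construct $v$ as the Mukai vector of a $\sigma$-stable object whose central charge has \emph{minimal modulus} among a suitable finite pool; this minimality will then obstruct any strictly semistable object of class $v$ from existing. First, since $E$ is $\sigma$-semistable with $v(E)^2 = 0$, I pick an arbitrary $\sigma$-stable Jordan--H\"older factor $E_0$ of $E$ and invoke Lemma~\ref{lem:semistable}(2) to conclude $v(E_0)^2 = 0$. Set $\varphi_0 := \varphi_\sigma(E_0)$ and let $R \subset \mathbb{C}$ be the ray of phase $\varphi_0$.

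The crucial input is the discreteness of $Im(Z) \subset \mathbb{C}$, which forces $Im(Z)$ to be a finitely generated discrete subgroup of $\mathbb{C}$, hence its intersection with any closed ball is finite. I will therefore consider
\[
T := \{\,|Z(F)| : F \in \mathcal{D} \text{ is } \sigma\text{-stable},\; \varphi_\sigma(F) = \varphi_0,\; v(F)^2 = 0,\; |Z(F)| \leq |Z(E_0)|\,\} \subset \mathbb{R}_{>0}.
\]
The set $T$ is nonempty (it contains $|Z(E_0)|$) and finite, since every element corresponds to a lattice point of $Im(Z) \cap R$ contained in the bounded disk of radius $|Z(E_0)|$. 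Let $t_0 := \min T$, attained by some $\sigma$-stable $F_0$, and define $v := v(F_0) \in K_{\mathrm{num}}(\mathcal{D})$. By construction $v^2 = 0$ and $F_0 \in \mathcal{M}_\sigma^{st}(v)$, so $\mathcal{M}_\sigma^{st}(v) \neq \emptyset$.

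It remains to verify the equality $\mathcal{M}_\sigma(v) = \mathcal{M}_\sigma^{st}(v)$. Suppose, aiming at a contradiction, that some $F \in \mathcal{M}_\sigma(v)$ is strictly $\sigma$-semistable, with Jordan--H\"older factors $F_1, \ldots, F_n$ ($n \geq 2$), all $\sigma$-stable of phase $\varphi_0$ and satisfying $\sum_i v(F_i) = v$. Applying Lemma~\ref{lem:semistable}(2) to $F$ (which has $v(F)^2 = v^2 = 0$) yields $v(F_i)^2 = 0$ for every $i$. Since $Z(F_1), \ldots, Z(F_n)$ are nonzero elements of $R$ whose moduli sum to $|Z(v)| = t_0$, each satisfies $0 < |Z(F_i)| < t_0$. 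Thus each $F_i$ would qualify for membership in $T$ with modulus strictly smaller than $t_0$, contradicting the minimality of $t_0$. Hence every $\sigma$-semistable object of class $v$ is in fact $\sigma$-stable, completing the argument.

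The main obstacle to anticipate is ensuring that the minimum $t_0$ is genuinely attained: this is where the full force of the discreteness hypothesis on $Im(Z)$ is used, without which one would face potential accumulation toward $0$. The remaining ingredients---preservation of the condition $v^2 = 0$ under Jordan--H\"older decomposition, and the alignment of central charges on the ray $R$---are immediate consequences of Lemma~\ref{lem:semistable}(2) and the defining properties of the slicing.
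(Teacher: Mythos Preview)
Your proof is correct and follows essentially the same approach as the paper's: both use a descent on $|Z|$ along Jordan--H\"older factors, with Lemma~\ref{lem:semistable}(2) ensuring that the condition $v^2=0$ is preserved and the discreteness of $Im(Z)$ guaranteeing termination. The paper phrases this as an induction (``proceed by induction, and this must terminate by the assumption of discreteness''), whereas you phrase it as a direct minimality argument over a finite set; the content is the same.
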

\begin{proof}
By assumption, there exists an object $E \in \mathcal{D}$ which is $\sigma$-semistable and satisfies $v(E)^2 = 0$. If there does not exist a strictly $\sigma$-semistable object of class $v(E)$, then we are done. Otherwise, assume $F$ is strictly $\sigma$-semistable of the same class. Taking a Jordan-H\"{o}lder filtration, we obtain $\sigma$-stable factors $F_i$ such that $\sum_i v(F_i) = v(F)$ and satisfying $v(F_i)^2 = 0$ by Lemma~\ref{lem:semistable}(2). Fixing a class $v_i = v(F_i)$ for some $i$, we proceed by induction, and this must terminate by the assumption of discreteness. 
\end{proof}
Finally, we combine the above observations to deduce Proposition~\ref{prop:reconstructionA}. The following proof largely follows the methods exhibited in \cite[Lemma 32.5]{2021}.
\begin{proof}[Proof of Proposition~\ref{prop:reconstructionA}]
Fix a stability condition $\sigma = (\mathcal{A}, Z) \in Stab(\mathcal{D})$ such that $Im(Z) \subset \mathbb{C}$ is discrete and such that $gldim(\sigma) < 2$. If there exists an object $E\in \mathcal{D}$ such that $Hom^1(E,E) = k$, then by Lemma~\ref{lem:1cy}(3), $E$ must be $\sigma$-semistable and $v(E)^2 = 0$. %On the other hand, if there exists a $1$-Calabi-Yau object and $\sigma$ is Serre-invariant, then by uniqueness of the Harder-Narasimhan filtration and Serre-invariance, every $\sigma$-semistable factor must be $1$-Calabi-Yau.
By Lemma~\ref{lem:conditionsreconstruct}, we may fix a class $v \in K_{num}(\mathcal{D})$ such that $\mathcal{M}_\sigma (v) = \mathcal{M}_\sigma^{st}(v) \neq 0$ and $v^2 = 0$.

By restricting to a connected component of the moduli functor, we may assume that $M_\sigma(v)$ is connected. By corollary~\ref{cor:mprojcurve}, $C \coloneqq M_\sigma(v)$ is a smooth, projective curve. By Theorem~\ref{thm:stack}, there exists a quasi-universal family $\mathcal{E} \in \mathcal{D}_{C-perf} \subset D^b(C \times X)$ which must be universal as there does not exist a non-trivial Brauer class in dimension $1$. Given two non-isomorphic $\sigma$-stable objects $E,F \in \mathcal{D}$ such that $v(E) = v(F) = v$, we have $Hom(E,F) = 0 = Hom(F,E)$ and thus $Hom^i(E,F) = 0 = Hom^i(F,E)$ for all $i$ from the fact that $v^2 = 0$ and the assumption on the global dimension. Thus, by \cite{https://doi.org/10.48550/arxiv.alg-geom/9506012}, the integral transform $\Phi_\mathcal{E} \colon D^b(C) \xhookrightarrow{} D^b(X)$ is an admissible embedding, and moreover factors through $\mathcal{D}$ by definition.

%In the latter case, if $\sigma$ is Serre-invariant, then let $\varphi$ be the phase corresponding to the image $Z(v) \in \mathbb{C}$. By assumption, there exists a $\sigma$-semistable $1$-Calabi-Yau object $E$ contained in $\mathcal{P}_\sigma(\varphi)$. As $SE[-1] \simeq E$, we have that $S\mathcal{P}_\sigma(\varphi)[-1] = \mathcal{P}_\sigma(\varphi)$ by Serre-invariance of $\sigma$. Arguing as in the second paragraph of Proposition~\ref{prop:1cy}, it follows that any $\sigma$-stable object of phase $\varphi$ is also $1$-Calabi-Yau. 
Under the additional assumption, every $\sigma$-stable object of class $v$ is also $1$-Calabi-Yau. Thus, the universal family $\mathcal{E} \in D^b(C \times X)$ is a family of $1$-Calabi-Yau objects in $\mathcal{D}$ and so $\Phi_\mathcal{E}$ is essentially surjective by the same argument as in the second paragraph of the proof of \cite[Theorem 5.4]{bridgeland2019equivalences}.
\end{proof}

\section{Reconstruction of non-commutative curves}\label{sec:main}
In this section, we prove our main Theorem~\ref{thm:mainthm} and deduce a number of additional corollaries for categories of dimension $1$. For convenience of notation, let $Stab_\mathcal{N}(\mathcal{D})$ the space of numerical Bridgeland stability conditions and let $Stab_{\mathcal{N},d}(\mathcal{D})$ be the subspace of stability conditions $\sigma = (\mathcal{A},Z)$ such that $Im(Z) \subset \mathbb{C}$ is discrete.
\begin{theorem}\label{thm:mainthm}
Assume that $\mathcal{D}$ is a connected, non-rational, geometric non-commutative scheme, and that $\inf\limits_{\sigma \in Stab_{\mathcal{N},d}(\mathcal{D})}gldim(\sigma) < \frac{6}{5}$. 
Then there exists an equivalence $\mathcal{D} \simeq D^b(C)$ with $C$ a smooth projective curve of genus $g \geq1$.
\end{theorem}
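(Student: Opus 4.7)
The plan is to assemble the results of Sections~\ref{generalprop}, \ref{sec:1spherical}, and~\ref{1cyreconstruction} in a straightforward way, since the substantive work has already been carried out.

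First I would fix a stability condition $\sigma \in Stab_{\mathcal{N},d}(\mathcal{D})$ with $gldim(\sigma) < \frac{6}{5}$; such a $\sigma$ exists because $\inf gldim$ is \emph{strictly} less than $\frac{6}{5}$ (so that the infimum need not be attained). Because $\mathcal{D}$ is non-rational and this $\sigma$ in particular satisfies $gldim(\sigma) < 2$, the framework of Section~\ref{generalprop} is in force. Define $d$ as the minimum value of $hom^1(D,D)$ over $D \in \mathcal{D}$; Lemma~\ref{lem:1cy}(1) yields $d \geq 1$, so the hypotheses of Proposition~\ref{prop:1sphericalnew} are met. Applying it, I would conclude that $d = 1$ and that \emph{every} object $F \in \mathcal{D}$ with $hom^1(F,F) = 1$ is $1$-spherical. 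In particular there exists some $E \in \mathcal{D}$ with $Hom^1(E,E) = k$, and every $\sigma$-stable such object is $1$-spherical.

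Next I would invoke Proposition~\ref{prop:reconstructionA}, whose hypotheses are now all verified: $\mathcal{D}$ is a connected, non-rational, geometric noncommutative scheme, $\sigma$ is numerical with $gldim(\sigma) < 2$ and $Im(Z) \subset \mathbb{C}$ discrete, and the previous step produces both the required object $E$ with $Hom^1(E,E) = k$ and the $1$-sphericality of every $\sigma$-stable such object. The second (equivalence) part of Proposition~\ref{prop:reconstructionA} then yields an equivalence $\Phi \colon D^b(C) \isomto \mathcal{D}$ for some smooth projective curve $C$.

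Finally, I would rule out $g(C) = 0$: if $C \simeq \mathbb{P}^1$, then the structure sheaf $\mathcal{O}_{\mathbb{P}^1}$ is an exceptional object in $D^b(C) \simeq \mathcal{D}$, contradicting the non-rational hypothesis on $\mathcal{D}$. Hence $g \geq 1$, finishing the proof. At this stage there is essentially no obstacle: the genuinely hard steps — reducing to $d = 1$ with the $1$-spherical conclusion (Proposition~\ref{prop:1sphericalnew}) and identifying a smooth projective curve in the moduli space of $\sigma$-stable objects together with promoting the resulting Fourier--Mukai embedding to an equivalence (Proposition~\ref{prop:reconstructionA}) — have already been established, so Theorem~\ref{thm:mainthm} falls out by combining them.
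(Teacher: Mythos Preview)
Your proposal is correct and follows exactly the same route as the paper's own proof: invoke Lemma~\ref{lem:1cy}(1) to get $d \geq 1$, apply Proposition~\ref{prop:1sphericalnew} to obtain $d=1$ with every such object $1$-spherical, and then feed this into Proposition~\ref{prop:reconstructionA} to obtain the equivalence. Your explicit justification of $g \geq 1$ via the non-rationality hypothesis is a detail the paper leaves implicit, but otherwise the arguments are identical.
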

\begin{proof}
By Lemma~\ref{lem:1cy} and Proposition~\ref{prop:1sphericalnew}, there exists a $1$-spherical object and every object $E \in \mathcal{D}$ satisfying $hom^1(E,E) = 1$ is also $1$-spherical. The conclusion then follows by Proposition~\ref{prop:reconstructionA}.
%For the claim, let $E$ be such an object. Then arguing as in the proof of Proposition~\ref{prop:dimbound}, there exists a positive integer $n$ and a stability condition $\sigma$ such that $S^nE$ satisfies Definition~\ref{defn:noncurve}. By Lemma~\ref{lem:1spherical}, it must be the case that $S^nE$ is $1$-spherical, and so $E$ is also $1$-spherical and the claim follows.
%By Proposition~\ref{prop:reconstructionS}, there exists a slicing $\mathcal{A}_\varphi$ of homological dimension $1$. In particular, this implies that the stability condition $\sigma$ must satisfy $gldim(\sigma) < 2$. By the second part of Proposition~\ref{prop:reconstructionA}, we have that $\mathcal{D} \simeq D^b(C)$ with $C$ a smooth projective curve. As there does not exist any exceptional objects, it must be the case that the genus $g$ of $C$ satisfies $g \geq 1$.
\end{proof}
\begin{rem}\label{rem:infdim}
We note that the assumptions in Theorem~\ref{thm:mainthm} can be weakened in various ways. In particular, the existence of a $1$-spherical object only needs the condition $\inf\limits_{\sigma\in Stab(\mathcal{D})}gldim(\sigma) < \frac{6}{5}$ where the infimum ranges over the full space of stability conditions. Then, we only need a separate stability condition $\sigma$ which in addition is numerical, has discrete central charge and satisfies $gldim(\sigma) < 2$ to achieve the desired equivalence.
\end{rem}
In the case of dimension $1$, we recall the following result.
\begin{theorem}\cite[Theorem 5.16]{Kikuta_2021}\label{thm:kikuta}
Let $C$ be a smooth projective curve of genus $g$.
\begin{enumerate}
\item
If $g = 0$, then there exists a stability condition on $D^b(C)$ such that $gldim(\sigma) = 1$.
\item
If $g = 1$, then $gldim(\sigma) = 1$ for any stability condition $\sigma \in Stab_\mathcal{N}(D^b(C))$. 
\item
If $g \geq 2$, then $gldim(\sigma) > 1$ for any stability condition $\sigma \in Stab_\mathcal{N}(D^b(C))$ and\\ $\inf\limits_{\sigma \in Stab_\mathcal{N}(D^b(C))}gldim(\sigma) = 1$.
\end{enumerate}
\end{theorem}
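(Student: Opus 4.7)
My plan is to handle the three genus ranges separately, with Serre duality on $C$ as the unifying tool. For (1), I would use the slope stability condition $\sigma$ on $D^b(\mathbb{P}^1)$ with heart $Coh(\mathbb{P}^1)$ and $Z(E) = -\deg(E) + i\,\mathrm{rk}(E)$. Since $Coh(\mathbb{P}^1)$ is hereditary, any nonzero $Hom(E,F[n])$ between $\sigma$-semistable sheaves forces $n\in\{0,1\}$: for $n=0$ the phases lie in $(0,1]$; for $n=1$, Serre duality with $\omega_{\mathbb{P}^1}\cong\mathcal{O}(-2)$ implies $\mu(F)\leq \mu(E)-2$, so $\varphi(F)+1-\varphi(E)\leq 1$. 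Equality is realized by $E=F=\mathcal{O}_x$ via $Ext^1(\mathcal{O}_x,\mathcal{O}_x)=k$, giving $gldim(\sigma)=1$.

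For (2), the decisive point is $S=[1]$ on an elliptic curve. For any $\sigma$-semistable $E,F$ with $Hom(E,F[n])\neq 0$, Serre duality produces $Hom(F,E[1-n])\neq 0$. Setting $\delta=\varphi(F)+n-\varphi(E)\geq 0$, the Serre-dual phase difference equals $1-\delta\geq 0$, forcing $\delta\in[0,1]$ and hence $gldim(\sigma)\leq 1$. Equality follows because every $\sigma$-stable $E$ satisfies $Hom(E,E[1])\cong Hom(E,E)^{*}\neq 0$, realizing the value $1$.

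For (3), I would exploit $S=(-)\otimes\omega_C[1]$ with $\deg\omega_C=2g-2>0$. Invoking Macrì's classification identifying the connected component of $Stab_\mathcal{N}(D^b(C))$ as the universal cover of $GL^+(2,\mathbb{R})$ acting on slope stability, every numerical $\sigma$-semistable object is a shift of a slope-semistable sheaf. For a $\sigma$-stable $E$ of positive rank, $E\otimes\omega_C$ is again $\sigma$-stable with $Z(E\otimes\omega_C)=Z(E)+\mathrm{rk}(E)(2g-2)Z(\mathcal{O}_x)$ in $K_{num}$. Since $\mathcal{O}_x$ sits at maximal phase in the heart and $\mathrm{rk}(E)>0$, adding a positive multiple of $Z(\mathcal{O}_x)$ rotates $\arg Z(E)$ strictly upward, so $\varphi(E\otimes\omega_C)>\varphi(E)$. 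Combined with $Hom(E,E\otimes\omega_C[1])\cong Hom(E,E)^{*}\neq 0$ from Serre duality, this produces a phase difference exceeding $1$. The infimum claim follows from the family $\sigma_t=(Coh(C),Z_t)$ with $Z_t(r,d)=-d+itr$: as $t\to\infty$, stable bundles of bounded degree cluster near phase $1/2$, so phase differences arising from $Ext^1$-pairings between bundles whose degrees differ by at most $2g-2$ tend to $1$ from above.

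The central obstacle is the strict inequality in (3): verifying that $\omega_C$-twisting strictly raises phases in \emph{every} numerical stability condition, not merely the slope-like ones. The reduction hinges on Macrì's classification together with the extremal position of $[\mathcal{O}_x]$ in the numerical lattice, which forces the central charge of any such condition to rotate $\omega_C$-twists of positive-rank stable objects strictly counterclockwise.
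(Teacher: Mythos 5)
The paper does not prove this statement: it is imported verbatim from \cite[Theorem 5.16]{Kikuta_2021}, so there is no internal proof to compare against. Judged on its own, your argument is essentially the standard one and, as far as I can tell, the same route taken in the cited source: an explicit computation for a slope-type stability condition in genus $0$; the Serre-functor-equals-shift trick in genus $1$ (which is clean and complete as you state it); and, in genus $\geq 2$, reduction of an arbitrary numerical stability condition to slope stability via Macr\`i's description of the stability manifold of a curve of positive genus as a single free $\widetilde{GL}_2^+(\mathbb{R})$-orbit.

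Two steps in part (3) are, however, justified in a way that does not work as written, though both are repairable. First, the strict inequality $\varphi_\sigma(E\otimes\omega_C)>\varphi_\sigma(E)$ for an \emph{arbitrary} numerical $\sigma$: your reason --- that $[\mathcal{O}_x]$ sits at the maximal phase of the heart, so adding a positive multiple of $Z(\mathcal{O}_x)$ rotates the central charge counterclockwise --- is only valid for the slope stability condition itself. After acting by a general element of $\widetilde{GL}_2^+(\mathbb{R})$, the objects $\mathcal{O}_x$ need not be extremal in the new heart, so the ``rotation'' heuristic has no content there. The correct reduction is to verify the inequality for slope stability by the explicit computation and then observe that phases of semistable objects under any $\sigma$ in the orbit are obtained by applying a strictly increasing homeomorphism of $\mathbb{R}$ (the function $f^{-1}$ in the sense of Remark~\ref{rem:faction}), which preserves strict inequalities between phases. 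Second, for $\inf_\sigma gldim(\sigma)=1$ you must control the excess over $1$ uniformly over semistable objects of \emph{all} slopes, not merely those of bounded degree; the relevant condition $Ext^1(E,F)\neq 0$ between semistable bundles is the slope bound $\mu(F)\leq\mu(E)+2g-2$ (the degrees themselves can differ arbitrarily), and for $Z_t=-d+itr$ one has $\frac{\partial}{\partial\mu}\arg(-\mu+it)=t/(t^2+\mu^2)\leq 1/t$ uniformly in $\mu$, whence $gldim(\sigma_t)\leq 1+\frac{2g-2}{\pi t}\rightarrow 1$. With these two repairs, and after checking the residual torsion-sheaf cases, the proof is complete.
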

As a consequence, we obtain the following corollary.
\begin{corollary}\label{cor:nonexistence}
There exists no connected, non-rational, geometric non-commutative schemes $\mathcal{D}$ satisfying $\inf\limits_{\sigma\in Stab_{\mathcal{N},d}(\mathcal{D})}gldim(\sigma) \in( 1, \frac{6}{5})$.
\end{corollary}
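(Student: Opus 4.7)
The plan is to combine Theorem~\ref{thm:mainthm} with the Kikuta result Theorem~\ref{thm:kikuta} and argue by contradiction. Suppose that there exists a connected, non-rational, geometric non-commutative scheme $\mathcal{D}$ such that $\inf\limits_{\sigma\in Stab_{\mathcal{N},d}(\mathcal{D})}gldim(\sigma) \in (1, \frac{6}{5})$. Then in particular this infimum is strictly less than $\frac{6}{5}$, so the hypothesis of Theorem~\ref{thm:mainthm} applies to $\mathcal{D}$.

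Applying Theorem~\ref{thm:mainthm}, I conclude that there is an equivalence $\mathcal{D} \simeq D^b(C)$ for some smooth projective curve $C$ of genus $g \geq 1$. Under this equivalence the space $Stab_{\mathcal{N},d}(\mathcal{D})$ is identified with the corresponding space for $D^b(C)$, and global dimension is an invariant of the stability condition under equivalence, so
\[
\inf\limits_{\sigma\in Stab_{\mathcal{N},d}(\mathcal{D})}gldim(\sigma) = \inf\limits_{\sigma\in Stab_{\mathcal{N},d}(D^b(C))}gldim(\sigma).
\]

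Now I invoke Theorem~\ref{thm:kikuta}: since $g \geq 1$, parts (2) and (3) together assert that the infimum of $gldim(\sigma)$ over all numerical stability conditions on $D^b(C)$ equals $1$ (in genus $1$ it is attained as an equality, while in genus $\geq 2$ it is only a limit). Restricting the infimum to the subspace $Stab_{\mathcal{N},d}(D^b(C)) \subset Stab_\mathcal{N}(D^b(C))$ can only make the infimum at least as large, but the standard stability conditions on $D^b(C)$ (given by degree and rank) have discrete image and realize phases approaching $1$, so the infimum over $Stab_{\mathcal{N},d}(D^b(C))$ is still equal to $1$. This contradicts the assumption that the infimum lies strictly above $1$, completing the proof.

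No serious obstacle is anticipated: the argument is a direct concatenation of Theorem~\ref{thm:mainthm} and Theorem~\ref{thm:kikuta}. The only mild point to verify is that the discreteness condition $Im(Z) \subset \mathbb{C}$ discrete does not change the infimum for $D^b(C)$ with $g \geq 1$, which is immediate because the classical stability conditions on $D^b(C)$ whose global dimensions realize (or approximate) the value $1$ already have discrete image in $\mathbb{C}$.
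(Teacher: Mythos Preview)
Your proposal is correct and matches the paper's own (implicit) argument: the corollary is obtained by combining Theorem~\ref{thm:mainthm} with Theorem~\ref{thm:kikuta} exactly as you describe. Your additional remark about the discreteness condition not affecting the infimum for $D^b(C)$ is a reasonable clarification, and the paper's introduction in fact already records that $\inf_{\sigma \in Stab_{\mathcal{N},d}(D^b(C))}gldim(\sigma)=1$ for $g\geq 1$.
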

\subsection{Serre-invariance and higher genus curves}
As a consequence of Theorem~\ref{thm:mainthm}, we obtain two sharper categorical characterizations of higher genus curves, and a partial converse to Theorem~\ref{thm:kikuta}. In this section, we relate our results to the weaker notion (see \cite[Theorem 4.2]{Kikuta_2021}) of the upper Serre dimension of $\mathcal{T}$ under the additional assumption that the stability condition $\sigma$ is invariant under the action of the Serre functor.

We first recall the notion of a Serre-invariant stability condition and the Serre dimension on $\mathcal{T}$, a triangulated category of finite type over $k$ with a Serre functor $S$ and a classical generator $G$. 
\begin{defn}
Let $\sigma = (\mathcal{A},Z)$ be a Bridgeland stability condition on $\mathcal{T}$. We say that $\sigma$ is Serre-invariant if $S \cdot \sigma = \sigma \cdot g$ for some $g \in \widetilde{GL}_2^+(\mathbb{R})$.
\end{defn}
More concretely, we have the following characterization of the $\widetilde{GL}_2^+(\mathbb{R})$ action.
\begin{rem}[\cite{2002math.....12237B},Lemma 8.2]\label{rem:faction}
Recall that specifying an element $g \in \widetilde{GL}_2^+(\mathbb{R})$ is equivalent to specifying a pair $(T,f)$ with $f \colon \mathbb{R} \rightarrow \mathbb{R}$ an increasing map such that $f (\varphi + 1) = f(\varphi) + 1$ and $T \colon \mathbb{R}^2 \rightarrow \mathbb{R}^2$ an orientation-preserving linear isomorphism agreeing on $\mathbb{R}/2\mathbb{Z}$.
\end{rem}
We also introduce the notion of the upper and lower Serre dimensions and we defer to \cite[Section 5]{2019arXiv190109461E} for the relevant definitions in greater generality. In the presence of a stability condition, the Serre dimension of $\mathcal{T}$ is particularly simple.
\begin{defn}[\cite{Kikuta_2021}, Proposition 3.9]\label{lem:asymptoticphase}
The upper and lower Serre dimension of $\mathcal{T}$ are defined respectively as follows.
\begin{enumerate}
\item
$\overline{Sdim}(\mathcal{T}) \coloneqq \limsup\limits_{n \rightarrow \infty}\frac{1}{n}\varphi_\sigma^+(S^nG)$
\item
$\underline{Sdim}(\mathcal{T}) \coloneqq \limsup\limits_{n \rightarrow \infty}\frac{1}{n}\varphi_\sigma^-(S^nG)$
\end{enumerate}
\end{defn}
\begin{rem}
The fact that these definitions are independent of the classical generator $G$ can be deduced from \cite[Definition 5.3]{2019arXiv190109461E}.
\end{rem}

In this subsection, we prove the following.
\begin{theorem}\label{thm:infdim1}
Assume that $\mathcal{D}$ is a connected, non-rational, geometric non-commutative scheme. Assume that there exist a numerical stability condition $\sigma \in Stab(\mathcal{D})$ such that $gldim(\sigma) < 2$ and $Im(Z) \subset \mathbb{C}$ is discrete. Then the following are equivalent:
\begin{enumerate}
\item
$\inf\limits_{\sigma \in Stab(\mathcal{D})}gldim(\sigma) = 1$
\item
$\mathcal{D} \simeq D^b(C)$ with $C$ a smooth projective curve of genus $g \geq 1$.
\setcounter{serreinv}{\value{enumi}}
\end{enumerate}
If in addition, $\mathcal{D}$ admits a Serre-invariant Bridgeland stability condition, then the above is equivalent to the following.
\begin{enumerate}
\setcounter{enumi}{\value{serreinv}}
\item
$\overline{Sdim}(\mathcal{D}) = 1$
\end{enumerate}
\end{theorem}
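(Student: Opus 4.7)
The theorem is proved by establishing $(1)\Leftrightarrow(2)$ unconditionally, and then under the Serre-invariance hypothesis establishing $(2)\Leftrightarrow(3)$.

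For $(1)\Leftrightarrow(2)$: The direction $(2)\Rightarrow(1)$ is immediate from Theorem~\ref{thm:kikuta}(2)--(3). For $(1)\Rightarrow(2)$, the hypothesis supplies a numerical $\sigma$ with discrete image and $gldim(\sigma)<2$, while the assumption $\inf gldim=1<\tfrac{6}{5}$ is precisely the configuration addressed by Remark~\ref{rem:infdim}: the infimum bound produces a $1$-spherical object via Proposition~\ref{prop:1sphericalnew}, and the prescribed numerical, discrete $\sigma$ feeds into Proposition~\ref{prop:reconstructionA} to yield an equivalence $\mathcal{D}\simeq D^b(C)$ with $C$ of genus $g\geq 1$.

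For $(2)\Rightarrow(3)$: Fix a classical generator $G\in D^b(C)$ and the standard slope stability condition $\sigma_{st}$. Since the Serre functor is $-\otimes\omega_C[1]$, we have $S^nG=G\otimes\omega_C^n[n]$. Phases of nonzero coherent sheaves on $C$ for $\sigma_{st}$ lie in the bounded interval $(0,1]$, so $\varphi_{\sigma_{st}}^+(G\otimes\omega_C^n)=O(1)$, yielding $\overline{Sdim}(\mathcal{D})=\lim_n \tfrac{1}{n}\varphi_{\sigma_{st}}^+(S^nG)=1$.

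For $(3)\Rightarrow(1)$ under Serre-invariance: First observe $\inf gldim\geq 1$ unconditionally, since for any $\sigma$ and any $\sigma$-stable object $E$, non-rationality forces $\mathrm{Ext}^1(E,E)\neq 0$, yielding $gldim(\sigma)\geq 1$. For the matching upper bound, let $\sigma_0$ be a Serre-invariant stability condition with $S\cdot\sigma_0=\sigma_0\cdot g$, $g=(T,f)\in\widetilde{GL}_2^+(\mathbb{R})$. Serre-invariance gives $\varphi(SE)=f(\varphi(E))$ for every $\sigma_0$-semistable $E$, so Definition~\ref{lem:asymptoticphase} computes $\overline{Sdim}(\mathcal{D})$ as the translation number $\tau(f)=\lim_n f^n(\varphi)/n$, which by (3) equals $1$. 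Combining Serre duality between $\sigma_0$-semistables with $\mathrm{Hom}(E,E)\neq 0$ yields
\[
gldim(\sigma_0)=\sup_\varphi\bigl(f(\varphi)-\varphi\bigr),
\]
the supremum ranging over phases of $\sigma_0$-semistable objects. Exploiting the $\widetilde{GL}_2^+(\mathbb{R})$-orbit of $\sigma_0$, which replaces $f$ by a conjugate $f_h^{-1}\circ f\circ f_h$ while preserving $\tau$, we construct a sequence of Serre-invariant stability conditions $\sigma_n$ whose associated $f_n$ concentrates the excess $\sup(f_n-\mathrm{id})-1$ in a vanishing phase interval, so $gldim(\sigma_n)\to 1$ and $\inf gldim\leq 1$.

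The principal obstacle is this final approximation step: although $\tau(f)=1$ controls $f$ on average, the pointwise oscillation $\sup(f-\mathrm{id})$ can exceed $1$, and controlling the $\widetilde{GL}_2^+(\mathbb{R})$-conjugacy orbit of a degree-$1$ circle homeomorphism with rational translation number $1$ requires either careful use of the fixed-point structure of $f-1$ modulo $\mathbb{Z}$, or, alternatively, a soft argument showing that $(3)$ together with Serre-invariance forces $\mathcal{D}$ to be $1$-Calabi-Yau, in which case $\mathcal{D}\simeq D^b(E)$ for an elliptic curve $E$ directly from $(1)\Leftrightarrow(2)$ and every stability condition on $\mathcal{D}$ has global dimension exactly $1$ by Theorem~\ref{thm:kikuta}(2).
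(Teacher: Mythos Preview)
Your treatment of $(1)\Leftrightarrow(2)$ matches the paper exactly, and your direct computation for $(2)\Rightarrow(3)$ is correct and in fact slightly cleaner than the paper's route, which instead proves $(1)\Rightarrow(3)$ via the inequality $\overline{Sdim}\leq\inf gldim$ from \cite[Theorem~4.2]{Kikuta_2021} together with a contradiction argument ruling out $\overline{Sdim}<1$.

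The genuine gap is in $(3)\Rightarrow(1)$. Your identity $gldim(\sigma_0)=\sup_{\varphi}(f(\varphi)-\varphi)$ is correct when the supremum is restricted to phases with nonempty slice, and the translation number of $f$ is indeed $1$. But the approximation step fails on two counts. First, the elements $(T_h,f_h)\in\widetilde{GL}_2^+(\mathbb{R})$ available for conjugation are not arbitrary lifts of circle homeomorphisms: $f_h$ is determined modulo $2\mathbb{Z}$ by the linear map $T_h$, so the conjugacy class of $f$ you can access is severely constrained, and you have given no argument that within this constrained class the displacement $\sup(f_h^{-1}ff_h-\mathrm{id})$ can be driven to $1$. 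Second, your proposed alternative---that $(3)$ plus Serre-invariance forces $\mathcal{D}$ to be $1$-Calabi--Yau---is simply false: for $C$ of genus $g\geq 2$, the category $D^b(C)$ satisfies all hypotheses (slope stability is Serre-invariant, since $-\otimes\omega_C[1]$ acts on the central charge by the matrix $\left(\begin{smallmatrix}-1 & 2g-2\\ 0 & -1\end{smallmatrix}\right)\in GL_2^+(\mathbb{R})$) and has $\overline{Sdim}=1$, yet is not $1$-Calabi--Yau.

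The paper closes $(3)\Rightarrow(1)$ by a completely different mechanism that avoids any limiting procedure on $gldim$. From $\overline{Sdim}=1$ and Serre-invariance, Lemma~\ref{lem:fixedphase} produces a phase $\varphi_0$ with $f(\varphi_0)=\varphi_0+1$ (this is where the translation-number argument is actually used, via boundedness of the orbit $\{f_F^k(\varphi_0)\}$). Proposition~\ref{prop:reconstructionS} then shows the heart $\mathcal{A}_{\varphi_0}=\mathcal{P}(\varphi_0-1,\varphi_0]$ satisfies $S\mathcal{A}_{\varphi_0}[-1]=\mathcal{A}_{\varphi_0}$ and hence has homological dimension $1$. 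At this point the paper invokes \cite[Theorem~2 and Lemma~3.3]{ctx31462736420006531} to extract a $1$-spherical object in $\mathcal{A}_{\varphi_0}$ and to verify that every stable object with $hom^1=1$ is $1$-spherical, after which Proposition~\ref{prop:reconstructionA} gives $\mathcal{D}\simeq D^b(C)$, and $(2)\Rightarrow(1)$ finishes. So the paper never shows $\inf gldim\leq 1$ directly from $(3)$; it passes through $(2)$.
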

In the following, $\mathcal{T}$ will always denote a triangulated category of finite type over $k$ with a Serre functor $S$, admitting a classical generator $G$ and we fix a Bridgeland stability condition $\sigma = (\mathcal{A},Z)$. We first note, that in the presence of an appropriate $\sigma$-semistable object, we can bound the asymptotic phases of such an object under iterated applications of the Serre functor.
\begin{lemma}\label{lem:stablephase}
Assume there exists an object $E \in \mathcal{T}$ such that $S^nE$ is $\sigma$-semistable for all $n \in \mathbb{Z}$. Then we have the following inequalities:
\[
\underline{Sdim}(\mathcal{T}) \leq \limsup\limits_{n \rightarrow \infty} \frac{1}{n} \varphi(S^nE) \leq \overline{Sdim}(\mathcal{T})
\]
\end{lemma}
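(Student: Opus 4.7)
The plan is to bound $\varphi(S^nE)$ from above and below in terms of the extremal phases of $S^nG$, by transporting a fixed pair of nonzero morphisms between $G$ and $E$ across all powers of the Serre functor. First, since $G$ is a classical generator and $E \neq 0$, there exist integers $a, b$ with
\[
\operatorname{Hom}(G,E[a])\neq 0 \quad\text{and}\quad \operatorname{Hom}(E,G[b])\neq 0.
\]
The first is standard: if $\operatorname{Hom}(G,E[k])=0$ for every $k$, then by induction on the cones, shifts, and direct summands producing $E$ from $G$ one would obtain $\operatorname{Hom}(E,E)=0$, contradicting $E\neq 0$. The second follows by applying the same reasoning to $SE$ and invoking Serre duality $\operatorname{Hom}(E,G[b])\cong \operatorname{Hom}(G,SE[-b])^\vee$.

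Next, applying the Serre functor $n$ times preserves Hom spaces, so
\[
\operatorname{Hom}(S^nG,S^nE[a])\neq 0, \qquad \operatorname{Hom}(S^nE,S^nG[b])\neq 0
\]
for every $n$. By hypothesis $S^nE$ is $\sigma$-semistable of phase $\varphi(S^nE)$, while $S^nG$ admits a Harder-Narasimhan decomposition whose semistable factors have phases in the interval $[\varphi_\sigma^-(S^nG),\varphi_\sigma^+(S^nG)]$. The standard fact that a nonzero morphism $A\to B$ between $\sigma$-semistable objects forces $\varphi(A)\leq \varphi(B)$, applied to an appropriate HN factor of $S^nG$ on each side, yields
\[
\varphi_\sigma^-(S^nG) \leq \varphi(S^nE)+a, \qquad \varphi(S^nE) \leq \varphi_\sigma^+(S^nG)+b.
\]

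Finally, dividing by $n$ and taking $\limsup$ as $n\to\infty$, the fixed constants $a,b$ drop out of the asymptotic and one recovers
\[
\underline{Sdim}(\mathcal{T})\leq \limsup_{n\to\infty}\frac{1}{n}\varphi(S^nE)\leq \overline{Sdim}(\mathcal{T}).
\]
I do not anticipate any substantial obstacle; the one point requiring care is that $a,b$ be chosen \emph{before} iterating $S$ so that they remain independent of $n$, and that the HN-filtration argument be invoked so as to single out the extremal semistable factor contributing to the nonzero morphism.
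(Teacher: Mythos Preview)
Your argument is correct and essentially identical to the paper's: both fix nonzero morphisms $\operatorname{Hom}(G,E[i])$ and $\operatorname{Hom}(E,G[j])$ once and for all, transport them through $S^n$, extract the phase inequalities $\varphi^-(S^nG)\leq \varphi(S^nE)+i$ and $\varphi(S^nE)\leq \varphi^+(S^nG)+j$ via the Harder--Narasimhan filtration, and take $\limsup_{n\to\infty}\frac{1}{n}$. The paper's version is simply terser, omitting the justification for the existence of $i,j$ and the explicit use of the fact that $S$ is an equivalence.
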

\begin{proof}
Fix $G \in \mathcal{T}$ to be any classical generator. Then, there must exist integers $i,j$ such that $Hom(G, E[i]), Hom(E, G[j]) \neq 0$. In particular, these imply the following inequalities.
\begin{align*}
\varphi^-(S^nG) \leq \varphi(S^nE) + i \\
\varphi(S^nE) \leq \varphi^+(S^nG) + j
\end{align*}
for all $n$. Applying $\limsup\limits_{n \rightarrow \infty} \frac{1}{n}$ together with Definition~\ref{lem:asymptoticphase} implies the claim.
\end{proof}

In the presence of a Serre-invariant stability condition and under the assumption that the Serre dimension is a positive integer $n$, we can identify a heart of a bounded t-structure $\mathcal{A} \subset \mathcal{T}$ of homological dimension $n$ satisfying Serre duality.

\begin{prop}\label{prop:reconstructionS}
Assume the following:
\begin{itemize}
\item
Assume that $\sigma = (\mathcal{P},Z)$ is Serre-invariant on $\mathcal{T}$.
\item
$\overline{Sdim}(\mathcal{T}) = n$ for $n\in \mathbb{Z}_+$.
\end{itemize}
Then there exists a slicing $\mathcal{A}_\varphi = \mathcal{P}([\varphi,\varphi+1))$ such that $S\mathcal{A}_\varphi[-n] = \mathcal{A}_\varphi$. In addition, $\mathcal{A}_\varphi$ is of homological dimension $n$.
\end{prop}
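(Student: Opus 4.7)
The plan is to translate the Serre-invariance hypothesis into a statement about the dynamics of a $1$-periodic homeomorphism of $\mathbb{R}$, and then to extract an integer fixed point using Poincar\'e rotation number theory. By Remark~\ref{rem:faction}, Serre-invariance $S\cdot\sigma=\sigma\cdot g$ with $g=(T,f)$ unwinds to $S\mathcal{P}(\psi)=\mathcal{P}(f(\psi))$ for every $\psi\in\mathbb{R}$, where $f\colon\mathbb{R}\to\mathbb{R}$ is a continuous increasing map with $f(\psi+1)=f(\psi)+1$. In particular,
\[
S\mathcal{A}_\varphi \;=\; \mathcal{P}([f(\varphi),f(\varphi)+1)) \;=\; \mathcal{A}_{f(\varphi)},
\]
so the condition $S\mathcal{A}_\varphi[-n]=\mathcal{A}_\varphi$ is equivalent to solving the single equation $f(\varphi)=\varphi+n$.

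To produce such a $\varphi$, I first identify $\overline{Sdim}(\mathcal{T})$ with the rotation number $\rho(f)\coloneqq \lim_{m\to\infty} f^m(\psi)/m$. Fixing a classical generator $G$ with Harder--Narasimhan factors of phases $\varphi_1\leq\cdots\leq\varphi_k$, Serre-invariance forces the HN factors of $S^mG$ to be the $\sigma$-semistable objects $S^m(G_i)$ of phases $f^m(\varphi_i)$, so $\varphi_\sigma^+(S^mG)=f^m(\varphi_k)$. Thus $\overline{Sdim}(\mathcal{T}) = \limsup_m f^m(\varphi_k)/m = \rho(f)$, which equals $n$ by hypothesis. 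I then apply a standard compactness argument: the function $h(\varphi)\coloneqq f(\varphi)-\varphi-n$ is continuous and $1$-periodic, so if it never vanished we would have $h\geq\epsilon$ or $h\leq -\epsilon$ uniformly on $\mathbb{R}/\mathbb{Z}$, and iterating $f$ would yield $f^m(\psi)\geq \psi+m(n+\epsilon)$ (or the reverse), contradicting $\rho(f)=n$. Hence some $\varphi_0$ satisfies $f(\varphi_0)=\varphi_0+n$, and $\mathcal{A}_{\varphi_0}$ is the desired heart.

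For the homological dimension claim, the identity $S\mathcal{A}_{\varphi_0}=\mathcal{A}_{\varphi_0}[n]$ combined with Serre duality controls both bounds. For $A,B\in\mathcal{A}_{\varphi_0}$ and $i>n$, writing $SA=A'[n]$ with $A'\in\mathcal{A}_{\varphi_0}$ gives
\[
\mathrm{Hom}(A,B[i]) \;\cong\; \mathrm{Hom}(B,SA[-i])^\vee \;=\; \mathrm{Hom}(B,A'[n-i])^\vee \;=\; 0,
\]
since $n-i<0$ and $B,A'$ both lie in the heart $\mathcal{A}_{\varphi_0}$; this gives the upper bound. For the converse, any nonzero $A\in\mathcal{A}_{\varphi_0}$ satisfies $\mathrm{Hom}(A,A)\neq 0$, which Serre-dualizes to $\mathrm{Hom}(A,SA)\neq 0$, i.e.\ $\mathrm{Ext}^n(A,A')\neq 0$, realizing the full homological dimension $n$.

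The main obstacle is rigorously identifying $\overline{Sdim}(\mathcal{T})$ with $\rho(f)$: this needs the fact that HN filtrations commute with $S^m$ under Serre-invariance so that the phases of the HN factors of $S^mG$ are exactly the $f$-orbits of the phases of the HN factors of $G$, together with Poincar\'e's theorem ensuring that $f^m(\psi)/m$ converges to a single rotation number independent of $\psi$ (so that $\limsup$ coincides with $\lim$). Once this dictionary is in place, extracting the fixed point and verifying the homological dimension are routine.
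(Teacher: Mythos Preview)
Your proof is correct and takes a genuinely different route from the paper's. Both arguments reduce the problem to solving $f(\varphi)=\varphi+n$ for the phase function $f$ coming from Serre-invariance, but you and the paper find this fixed point by different means.

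The paper (in its Lemma~\ref{lem:fixedphase}) fixes a phase $\varphi_0$ with $\mathcal{P}(\varphi_0)\neq 0$, invokes an external result of Kuznetsov--Perry together with Lemma~\ref{lem:stablephase} to get $\limsup_k \tfrac{1}{k}(f^k(\varphi_0)-nk)=0$, then argues by hand that the forward orbit $\{f^k(\varphi_0)-nk\}$ is bounded of length $<1$, and finally takes $\limsup_k (f^k(\varphi_0)-nk)$ as the fixed point, using monotonicity and continuity of $f$ to commute $f$ with the $\limsup$. Your argument is more direct and fully self-contained: you observe via uniqueness of HN filtrations that $\varphi_\sigma^+(S^mG)=f^m(\varphi^+(G))$, so the hypothesis $\overline{Sdim}(\mathcal{T})=n$ says exactly that $\limsup_m f^m(\psi)/m=n$; then the continuous $1$-periodic function $h(\varphi)=f(\varphi)-\varphi-n$ either vanishes somewhere or is bounded away from zero on $\mathbb{R}/\mathbb{Z}$, and the latter forces $\limsup_m f^m(\psi)/m\neq n$ by iteration. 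This intermediate-value/compactness argument is standard circle dynamics and avoids the external reference entirely; note that you do not actually need Poincar\'e's convergence theorem, since the contradiction already follows from the $\limsup$. Your treatment of the homological dimension is essentially the same as the paper's for the upper bound, and you additionally supply the lower bound $\mathrm{Ext}^n(A,A')\neq 0$, which the paper leaves implicit.
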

To prove the above Proposition, we will study the asymptotic action of $S$ on the phases $\varphi$ given by the action described in Remark~\ref{rem:faction}.
\begin{lemma}\label{lem:fixedphase}
Under the assumptions of Proposition~\ref{prop:reconstructionS}, let $(G,f) \in \widetilde{GL}_2^+(\mathbb{R})$ correspond to the action of $S$ on $\sigma = (\mathcal{P}.Z)$. Then there exists a real number $\varphi \in \mathbb{R}$ such that $f(\varphi) = \varphi + n$.
\end{lemma}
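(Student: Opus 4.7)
The plan is to reduce the statement to a classical fact about rotation numbers of lifts of orientation-preserving circle homeomorphisms. The key translation is: the Serre-invariance equation $S\cdot\sigma = \sigma\cdot(G,f)$ asserts precisely that for every $\sigma$-semistable object $E\in\mathcal{P}(\varphi)$, the image $SE$ is $\sigma$-semistable of phase $f(\varphi)$. Since $f$ is an orientation-preserving homeomorphism of $\mathbb{R}$ with $f(\varphi+1) = f(\varphi)+1$, it is the lift of a circle homeomorphism and hence admits a rotation number.

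First, I would compute $\overline{Sdim}(\mathcal{T})$ in terms of $f$. Let $\Gamma$ be a classical generator of $\mathcal{T}$ with Harder–Narasimhan factors $\Gamma_1,\ldots,\Gamma_k$ of phases $\varphi_1\leq\cdots\leq\varphi_k$. Applying $S^m$ to the HN filtration of $\Gamma$ produces a filtration whose graded pieces are $S^m\Gamma_i \in \mathcal{P}(f^m(\varphi_i))$. Because $f$ is strictly increasing, the ordering $f^m(\varphi_1)\leq\cdots\leq f^m(\varphi_k)$ is preserved, and the resulting filtration is a genuine HN filtration of $S^m\Gamma$. Consequently,
\begin{equation*}
\varphi_\sigma^+(S^m\Gamma) \;=\; f^m(\varphi_k) \;=\; f^m\bigl(\varphi_\sigma^+(\Gamma)\bigr).
\end{equation*}

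Second, I would identify the Serre dimension with the translation number of $f$. By the classical Poincar\'e theorem, the translation number $\rho(f) := \lim_{m\to\infty} (f^m(x)-x)/m$ exists as a genuine limit and is independent of $x\in\mathbb{R}$. Combined with the previous step,
\begin{equation*}
n \;=\; \overline{Sdim}(\mathcal{T}) \;=\; \limsup_{m\to\infty}\tfrac{1}{m}\varphi_\sigma^+(S^m\Gamma) \;=\; \lim_{m\to\infty}\tfrac{f^m(\varphi_k)}{m} \;=\; \rho(f).
\end{equation*}

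Third, I would deduce the fixed point from $\rho(f) = n\in\mathbb{Z}$. Consider the continuous, $1$-periodic function $h(\varphi) := f(\varphi) - \varphi - n$. If $h$ never vanishes, then by continuity and $1$-periodicity it is bounded away from $0$, so either $h\geq c>0$ or $h\leq -c<0$ everywhere. Iterating $f$ then gives $f^m(\varphi) \geq \varphi + m(n+c)$ or $f^m(\varphi) \leq \varphi + m(n-c)$, respectively, contradicting $\rho(f) = n$ in either case. Hence $h$ has a zero, yielding $\varphi\in\mathbb{R}$ with $f(\varphi) = \varphi + n$. The main obstacle I anticipate is the first paragraph — rigorously extracting the pointwise phase action $SE\in\mathcal{P}(f(\varphi))$ from the group-theoretic equality $S\cdot\sigma = \sigma\cdot(G,f)$, and confirming that the $S^m$-image of the HN filtration of $\Gamma$ is itself HN (so phases transport rigidly via $f^m$); once in hand, the rest is standard one-dimensional dynamics.
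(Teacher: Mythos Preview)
Your proposal is correct and takes a genuinely different route from the paper.

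The paper does not invoke Poincar\'e's rotation number theorem explicitly. Instead it fixes a phase $\varphi_0$ with $\mathcal{P}(\varphi_0)\neq 0$, appeals to \cite[Proposition 6.17]{kuznetsov2021serre} and Lemma~\ref{lem:stablephase} to obtain $\limsup_{k\to\infty}\frac{1}{k}f_F^k(\varphi_0)=0$ for $f_F\coloneqq f-n$, and then argues by hand that the orbit $\{f_F^k(\varphi_0)\}$ is bounded in an interval of length $<1$, so that $\varphi_0^+\coloneqq\limsup_k f_F^k(\varphi_0)$ exists and is a fixed point of $f_F$ (using that a continuous increasing map on a compact interval commutes with $\limsup$). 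Your argument is more streamlined: the identity $\varphi_\sigma^+(S^m\Gamma)=f^m(\varphi_\sigma^+(\Gamma))$, obtained by pushing the HN filtration through $S^m$, makes the equality $\overline{Sdim}(\mathcal{T})=\rho(f)$ immediate without external input, and the fixed-point step is the standard intermediate-value argument for lifts of circle homeomorphisms with integral rotation number. The payoff of your route is self-containment and a clean conceptual identification of the Serre dimension with a rotation number; the paper's route stays closer to the stability-condition formalism and produces the fixed point constructively as an accumulation point of an orbit, but at the cost of an outside reference. Your anticipated obstacle is not serious: the equation $S\cdot\sigma=\sigma\cdot(G,f)$ unwinds precisely to $S(\mathcal{P}(\varphi))=\mathcal{P}(f(\varphi))$ for all $\varphi$, and since $S$ is an autoequivalence it preserves triangles, so the image of an HN filtration under $S^m$ is again an HN filtration because $f^m$ is strictly increasing.
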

\begin{proof}
By assumption, the action of the auto-equivalence $F \coloneqq S[-n]$ on $\sigma$ is equivalent to an action by an element of $\widetilde{GL}_2^+(\mathbb{R})$ with the phase function $f_F(\varphi) = f(\varphi) - n$. Similarly, we denote by $f_{F^{-1}}$ the phase function associated with the inverse auto-equivalence $F^{-1} = S^{-1}[n]$. We will prove that the function $f_F \colon \mathbb{R} \rightarrow \mathbb{R}$ has a fixed point. 

Fix a phase $\varphi_0$ such that $\mathcal{P}(\varphi_0) \neq 0 $. By \cite[Proposition 6.17]{kuznetsov2021serre} and Lemma~\ref{lem:stablephase}, we must have $\limsup\limits_{k\rightarrow \infty} \frac{1}{k} f^k_F(\varphi_0) = 0$. We first prove that the set $\{f_F^k(\varphi_0)\}_{k \in \mathbb{Z}_+} \subset \mathbb{R}$ is bounded of length $<1$. Assume on the contrary that the set $\{f_F^k(\varphi_0)\}_{k\in\mathbb{Z}_+}$ is of length $\geq 1$. Then acting by $f_{F^{-1}}$, which is increasing and periodic of order $1$, there must exist $m \in \mathbb{Z}_+$ such that either $f_F^m(\varphi_0) \geq \varphi_0 + 1$ or $\varphi_0 -1 \geq f_F^m(\varphi_0)$. 

We assume the first case. By the assumption that $f_F$ is increasing and periodic of order 1, for any positive integer $k \in \mathbb{Z}$, we have that $f_F^{mk}(\varphi_0) \geq f_F^{m(k-1)}(\varphi_0+1) = f_F^{m(k-1)}(\varphi_0) + 1 \geq \ldots \geq \varphi_0 + k$. But then we have
\[
0 = \limsup\limits_{k\rightarrow\infty}\frac{1}{k} f^k_F(\varphi_0) \geq \limsup\limits_{k\rightarrow\infty}\frac{1}{km}f^{km}_F(\varphi_0) \geq \limsup\limits_{k\rightarrow\infty}\frac{1}{km} (\varphi_0 +k)  = \frac{1}{m}
\]
where the first inequality follows as $\frac{1}{km}f_F^{km}(\varphi_0)$ is a subsequence of $\frac{1}{k}f_F^k(\varphi_0)$ and the second follows from the inequalities of the previous sentence. As $m \in \mathbb{Z}_+$, this is a contradiction.

In the latter case, an identical argument as in Lemma~\ref{lem:stablephase} with the inverse Serre functor together with \cite[Proposition 3.9]{Kikuta_2021} implies that $\limsup\limits_{k\rightarrow \infty} \frac{1}{k} f^k_{F^{-1}}(\varphi_0) = 0$. In particular, we have $f^m_{F^{-1}}(\varphi_0) \geq \varphi_0 + 1$ and an identical argument as in the above paragraph yields a contradiction.

Define $\varphi_0^+ \coloneqq \limsup\limits_{k\rightarrow \infty}f_F^k(\varphi_0)$. We claim that this gives the desired fixed point. Indeed, we have the sequence of equalities 
\[f_F(\limsup\limits_{k\rightarrow \infty}f_F^k(\varphi_0)) = \limsup\limits_{k\rightarrow\infty}f_F^{k+1}(\varphi_0) = \limsup\limits_{k\rightarrow\infty}f_F^k(\varphi_0)
\]
In the first equality above, we have used the elementary fact that given a continuous increasing function $f \colon A \rightarrow \mathbb{R}$ with $A$ compact and a sequence $(x_k)_{k \in \mathbb{Z}_+} \subset A$, we have the equality
\[
f(\limsup\limits_{k \rightarrow \infty}x_k) = \limsup\limits_{k \rightarrow \infty}f(x_k)
\]
Thus, we have the desired fixed point, and we conclude.
\end{proof}
We now turn to proving Proposition~\ref{prop:reconstructionS}.
\begin{proof}[Proof of Proposition~\ref{prop:reconstructionS}]
By Lemma~\ref{lem:fixedphase}, there exists $\varphi_0 \in \mathbb{R}$ such that $S(\varphi_0) = \varphi_0+ n$. As the action is periodic of order one, we have $S(\varphi_0 - 1) = S(\varphi_0) - 1 = \varphi_0 + n-1$. As the action is continuous and increasing, we have that $S((\varphi_0 - 1, \varphi_0]) = (\varphi_0 + n-1, \varphi_0 + n]$. Thus, we conclude that $S(\mathcal{P}((\varphi_0 - 1, \varphi_0])) = \mathcal{P}((\varphi_0 + n - 1, \varphi_0 +n])$ as desired.

We now prove that the heart $\mathcal{A}_\varphi\coloneqq \mathcal{P}(\varphi_0 -1, \varphi_0]$ is indeed of homological dimension $n$. For any objects $A,B \in \mathcal{A}_\varphi$, we have $Hom(A,B[i]) = 0$ for $i < 0 $. On the other hand, we have $Hom(A,B[i]) = Hom(B[i], SA) = Hom(B[i-n],SA[-n]) = Hom^{n-i}(B,SA[-n]) = 0$ for $i > n$ as $SA[-n] \in \mathcal{A}_\varphi$ by Proposition~\ref{prop:reconstructionS}. 
\end{proof}
%We follow the argument of \cite[Lemma 5.15]{Kikuta_2021}
%\begin{lemma}
%Let $\mathcal{T}$ be a triangulated category with a Serre-invariant Bridgeland stability condition $\sigma = (\mathcal{A},Z)$. Assume that the auto-equivalence $S[-1]$ preserves the heart $\mathcal{A}$, i.e. $S\mathcal{A}[-1] = \mathcal{A}$. Then we have the equality $$\overline{Sdim}(\mathcal{T}) = \inf\limits_{\sigma \in Stab(\mathcal{D})}gldim(\sigma) = 1$$ 
%\end{lemma}
%\begin{proof}
%We first note that the global dimension $\sigma$ must satisfy $gldim(\sigma) < 2$. Indeed...

%By \cite[Proposition 4.3]{Kikuta_2021}, it suffices to prove that for any $\epsilon \in (0,1)$, there exists a stability condition $\sigma_\epsilon$ such that $S(\mathcal{P}_{\sigma_\epsilon}(\varphi)) \subset \mathcal{P}_{\sigma_\epsilon}([\varphi+1 - \epsilon, \varphi+1 + \epsilon])$. 
%\end{proof}
Finally, we apply the conclusion of Proposition~\ref{prop:reconstructionS} to deduce Theorem~\ref{thm:infdim1}.

\begin{proof}[Proof of Theorem~\ref{thm:infdim1}]
$(1) \implies(2)$: This follows directly by Theorem~\ref{thm:mainthm} and remark~\ref{rem:infdim}.

$(2) \implies (1)$: This follows directly by Theorem~\ref{thm:kikuta}.

$(1) \implies (3)$: By \cite[Theorem 4.2]{Kikuta_2021} and the assumption, we have that $\overline{Sdim}(\mathcal{D}) \leq 1$. Assume on the contrary that $\overline{Sdim}(\mathcal{D}) < 1$. By Proposition~\ref{prop:1sphericalnew}, there exists an object $A$ such that $hom^1(A,A) = 1$, and hence $S^nA$ is $\sigma$-semistable for all $n$ by Lemma~\ref{lem:1cy}(3). By Lemma~\ref{lem:stablephase}, we have that $\limsup\limits_{n \rightarrow \infty} \frac{1}{n} \varphi(S^nA) < 1$. In particular, there must exist an integer $N$ such that $\varphi(S^NA) - \varphi(S^{N-1}A) < 1$. On the other hand, we have $Hom(A,A[n]) = Hom(S^{N-1}A,S^{N-1}A[n]) = Hom(S^{N-1}A[n],S^NA) = 0$ for $n \neq 0$ by the assumption on the phases and $\sigma$-semistability of $A$. Thus, $A$ must be an exceptional object, contradicting the assumption.

$(3) \implies (1)$: By Proposition~\ref{prop:reconstructionS}, there exists a slicing $\mathcal{A}_\varphi$ of homological dimension $1$. By \cite[Theorem 2]{ctx31462736420006531}, there exists a $1$-spherical object in $\mathcal{A}_\varphi$.  By \cite[Lemma 3.3]{ctx31462736420006531} and the second part of Proposition~\ref{prop:reconstructionA}, we have that $\mathcal{D} \simeq D^b(C)$ with $C$ a smooth projective curve. As there does not exist any exceptional objects, it must be the case that the genus $g$ of $C$ satisfies $g \geq 1$.
\end{proof}

\subsection{The case of $\bm{gldim(\sigma) = 1}$}\label{gldim1}
In this subsection, we apply Theorem~\ref{thm:mainthm} to deduce a structural result for $\mathcal{D}$ when there exists a stability condition $\sigma$ with global dimension $gldim(\sigma) = 1$. In particular, our conclusion applies in the setting when there exists exceptional objects in $\mathcal{D}$. 
%In this subsection, we prove Theorem~\ref{thm:gldim1} and corollary~\ref{cor:gldim1}. In the following $\mathcal{D}$ will always be a geometric noncommutative scheme with a Serre functor $S$. We will assume that all Bridgeland stability conditions are numerical.
%\begin{theorem}\label{thm:gldim1}
%Assume that $\mathcal{D}$ is a connected, non-rational noncommutative curve. Assume in particular, that there exists a stability condition $\sigma \in Stab(\mathcal{D})$ such that $gldim(\sigma) = 1$ and that $Im(Z) \subset \mathbb{C}$ is discrete. Then $\mathcal{D} \simeq D^b(E)$ with $E$ a smooth elliptic curve.
%\end{theorem}
\begin{corollary}\label{cor:gldim1}
Assume that $\mathcal{D}$ is a connected, geometric non-commutative scheme. Assume that there exists a numerical Bridgeland stability condition $\sigma \in Stab(\mathcal{D})$ such that $Im(Z) \subset \mathbb{C}$ is discrete and such that $gldim(\sigma) = 1$. Then $\mathcal{D} = \langle \mathcal{C}, E_1 ,\ldots E_n \rangle$ admits a semi-orthogonal decomposition for some integer $n$ with $E_i \in \mathcal{D}$ exceptional objects and $\mathcal{C}$ is either zero or equivalent to $D^b(E)$, with $E$ a smooth elliptic curve.
\end{corollary}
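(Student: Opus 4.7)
The plan is to produce the semi-orthogonal decomposition inductively by splitting off exceptional objects one at a time, and to identify the residual component via Theorem~\ref{thm:mainthm}.

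The key preliminary observation is that $gldim(\sigma) = 1$ forces the heart $\mathcal{A}$ to be hereditary: for any $A, B \in \mathcal{A}$, expanding into Harder--Narasimhan factors whose phases all lie in $(\varphi_0, \varphi_0 + 1]$ shows that $Hom(A, B[i]) = 0$ whenever $i \geq 2$, since otherwise some phase difference would exceed $gldim(\sigma)$. Consequently, $\mathcal{D} \simeq D^b(\mathcal{A})$ with $\mathcal{A}$ hereditary, every object of $\mathcal{D}$ decomposes as $\bigoplus_i H^i(-)[-i]$, and every exceptional object of $\mathcal{D}$ is a shift of an exceptional object in $\mathcal{A}$.

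If $\mathcal{D}$ has no exceptional objects, then Theorem~\ref{thm:mainthm} (applicable since $1 < \tfrac{6}{5}$) yields $\mathcal{D} \simeq D^b(C)$ with $g(C) \geq 1$, and Theorem~\ref{thm:kikuta}(3) rules out $g \geq 2$ because no stability condition on such a $D^b(C)$ realizes the equality $gldim = 1$. Hence $\mathcal{C} := \mathcal{D} \simeq D^b(E)$ for an elliptic curve $E$ and we take $n = 0$. Otherwise, fix an exceptional object and, after shifting, assume $E_1 \in \mathcal{A}$. By the Geigle--Lenzing perpendicular theorem in the hereditary abelian category $\mathcal{A}$, the right perpendicular $\mathcal{A}_1 := E_1^\perp \cap \mathcal{A}$ is again a hereditary abelian category; passing to derived categories yields $\mathcal{D} = \langle \mathcal{D}_1, E_1 \rangle$ with $\mathcal{D}_1 = D^b(\mathcal{A}_1)$. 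Restricting $Z$ along the inclusion $K_{num}(\mathcal{D}_1) \hookrightarrow K_{num}(\mathcal{D})$ produces a numerical central charge $Z_1$ with discrete image, and the pair $(\mathcal{A}_1, Z_1)$ defines a Bridgeland stability condition $\sigma_1$ on $\mathcal{D}_1$ whose semistable objects are precisely the $\sigma$-semistable objects lying in $\mathcal{D}_1$. In particular $gldim(\sigma_1) \leq gldim(\sigma) = 1$.

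Iterating this step, the process terminates because $K_{num}(\mathcal{D})$ has finite rank (as $\mathcal{D}$ is admissible in $D^b(X)$ for a smooth projective $X$) and any exceptional collection is linearly independent in $K_{num}(\mathcal{D})$. The final residual $\mathcal{C}$ has no exceptional objects, so the base case yields either $\mathcal{C} = 0$ or $\mathcal{C} \simeq D^b(E)$ with $E$ elliptic. The main obstacle is verifying carefully that $(\mathcal{A}_1, Z_1)$ is a bona fide stability condition of the required type: one must check that Harder--Narasimhan filtrations of objects in $\mathcal{D}_1$ with respect to $\sigma_1$ coincide with those with respect to $\sigma$, so that neither the discreteness of the central charge nor the global-dimension bound is lost under restriction. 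A secondary subtlety is to ensure the final residual $\mathcal{C}$ is connected before invoking Theorem~\ref{thm:mainthm}, which must be deduced from connectedness of $\mathcal{D}$ together with the structure of the decomposition.
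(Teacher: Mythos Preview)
Your overall strategy---iteratively splitting off exceptional objects and identifying the residual via the reconstruction theorem---is the same as the paper's. The substantive difference is in how the induced stability condition on the orthogonal is obtained. You route through the hereditary structure of $\mathcal{A}$ and the Geigle--Lenzing perpendicular theorem, then flag as ``the main obstacle'' the verification that $(\mathcal{A}_1, Z_1)$ is a bona fide stability condition whose Harder--Narasimhan filtrations agree with those of $\sigma$. The paper bypasses this entirely by invoking Lemma~\ref{lem:mon} (that is, \cite[Proposition~5.2]{Kikuta_2021}), which asserts directly that any nonzero admissible subcategory of a category carrying a stability condition with $gldim = 1$ inherits one with $gldim \leq 1$. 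This is exactly the content you identified as missing, so your gap is genuine but is closed by a citation already present in the paper rather than by the Geigle--Lenzing route. For termination the paper uses finiteness of $HH_0(\mathcal{D})$ rather than the rank of $K_{num}(\mathcal{D})$; either suffices. For the base case the paper invokes Theorem~\ref{thm:infdim1} (together with Theorem~\ref{thm:kikuta}) rather than Theorem~\ref{thm:mainthm} directly, but this is cosmetic since $1 < \tfrac{6}{5}$.

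On your secondary subtlety, connectedness of the residual $\mathcal{C}$ before applying the reconstruction theorem: the paper's proof is equally silent on this point when applying Theorem~\ref{thm:infdim1} to $E^\perp$. In either argument one may resolve it by passing to connected components of the residual, each of which still carries an induced stability condition of global dimension at most $1$ by the same lemma.
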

The main additional input is the following lemma, which allows us to inductively study admissible subcategories $\mathcal{D}$ using the theory of stability conditions.
\begin{lemma}\cite[Proposition 5.2]{Kikuta_2021}\label{lem:mon}
Let $\sigma \in Stab(\mathcal{D})$ be a stability condition such that $gldim(\sigma) = 1$ and $\mathcal{D}' \xhookrightarrow{} \mathcal{D}$ a nonzero admissible subcategory. Then there exists a stability condition $\sigma' \in Stab(\mathcal{D}')$ such that $gldim(\sigma') \leq gldim(\sigma)$.
\end{lemma}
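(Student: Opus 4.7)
The plan is to induct on the rank of $K_{num}(\mathcal{D})$, peeling off exceptional objects one at a time. The base case $K_{num}(\mathcal{D}) = 0$ forces $\mathcal{D} = 0$, which is trivially handled with $n = 0$ and $\mathcal{C} = 0$.

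For the inductive step I split on whether $\mathcal{D}$ admits an exceptional object. In the non-rational case, $\mathcal{D}$ satisfies all hypotheses of Theorem~\ref{thm:mainthm} because $gldim(\sigma) = 1 < \frac{6}{5}$ and $\sigma$ is numerical with discrete image. Thus $\mathcal{D} \simeq D^b(C)$ with $g(C) \geq 1$. To pin down the genus I invoke Theorem~\ref{thm:kikuta}(3): if $g \geq 2$ then every numerical stability condition on $D^b(C)$ has $gldim > 1$, contradicting $gldim(\sigma) = 1$. Hence $g = 1$, and one takes $\mathcal{C} = \mathcal{D} \simeq D^b(E)$ with $E$ elliptic.

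In the remaining case, fix an exceptional object $E \in \mathcal{D}$ and form the semi-orthogonal decomposition $\mathcal{D} = \langle {}^\perp E, E\rangle$. Since ${}^\perp E$ is admissible in $\mathcal{D}$, composition with $\mathcal{D} \hookrightarrow D^b(X)$ exhibits ${}^\perp E$ as a geometric non-commutative scheme. Lemma~\ref{lem:mon} then supplies $\sigma' \in Stab({}^\perp E)$ with $gldim(\sigma') \leq gldim(\sigma) = 1$; the construction of $\sigma'$ from \cite{Kikuta_2021} is by restriction, so $\sigma'$ remains numerical and the image of its central charge remains discrete. If $gldim(\sigma') < 1$, the Kikuta--Ouchi classification recalled in the introduction identifies ${}^\perp E \simeq D^b(Q)$ for a Dynkin quiver $Q$, which is generated by a full exceptional collection; concatenation with $E$ yields the desired SOD with $\mathcal{C} = 0$. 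If $gldim(\sigma') = 1$, the inductive hypothesis applied to ${}^\perp E$ produces a SOD $\langle \mathcal{C}', F_1, \ldots, F_m \rangle$ of the required shape, and appending $E$ as $F_{m+1}$ completes the SOD of $\mathcal{D}$. The rank of $K_{num}$ drops at each stage since the class of an exceptional object is non-torsion, guaranteeing termination.

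The main obstacle is that the inductive hypothesis requires ${}^\perp E$ to be \emph{connected}. A priori ${}^\perp E$ could decompose as a completely orthogonal sum $\mathcal{A} \oplus \mathcal{B}$, in which case separately invoking the induction on each factor risks producing two elliptic curve components in $\mathcal{C}$, violating the stated form. To address this I would prove a strengthened inductive claim in which $\mathcal{C}$ is allowed to be a finite completely orthogonal sum of $D^b(E_i)$'s and Dynkin components, then invoke connectedness of the original $\mathcal{D}$ at the end. The key point is that if $\mathcal{D} = \langle \mathcal{C}_1 \oplus \mathcal{C}_2, E_1, \ldots, E_n\rangle$ with $\mathcal{C}_i \simeq D^b(E_i)$ elliptic, then connectedness of $\mathcal{D}$ forces some exceptional $E_j$ to have nontrivial Ext groups with objects in both $\mathcal{C}_1$ and $\mathcal{C}_2$; applying Lemma~\ref{lem:mon} to the admissible subcategory $\langle \mathcal{C}_1 \oplus \mathcal{C}_2, E_j\rangle$ one gets a stability condition of global dimension $\leq 1$ on this subcategory, which I would argue is incompatible with the presence of two distinct $1$-Calabi--Yau components linked by an exceptional object, e.g.\ by deducing from $gldim \leq 1$ that any such $E_j$ would itself have to be $1$-spherical or contradict the minimality in Lemma~\ref{lem:1cy}. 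This bridging analysis is the step I expect to require the most care.
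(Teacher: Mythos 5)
Your proposal does not address the statement you were asked to prove. The statement is Lemma~\ref{lem:mon}: given a stability condition $\sigma$ on $\mathcal{D}$ with $gldim(\sigma)=1$ and a nonzero admissible subcategory $\mathcal{D}'\hookrightarrow\mathcal{D}$, one must \emph{construct} a stability condition $\sigma'$ on $\mathcal{D}'$ with $gldim(\sigma')\leq gldim(\sigma)$. What you have written is instead an inductive proof sketch of Corollary~\ref{cor:gldim1} (the semi-orthogonal decomposition $\mathcal{D}=\langle\mathcal{C},E_1,\ldots,E_n\rangle$), and --- critically --- your argument explicitly \emph{invokes} Lemma~\ref{lem:mon} as an ingredient (``Lemma~\ref{lem:mon} then supplies $\sigma'\in Stab({}^\perp E)$\ldots''). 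So as a proof of the stated lemma it is circular: you assume exactly the assertion to be established. Nothing in your text explains how to produce a heart of bounded t-structure on $\mathcal{D}'$ or a central charge on it, which is the entire content of the lemma.

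For the record, the paper does not prove this lemma either; it is quoted from \cite[Proposition 5.2]{Kikuta_2021}. The actual argument there exploits the fact that $gldim(\sigma)=1$ forces the heart $\mathcal{P}((\varphi,\varphi+1])$ to have homological dimension at most one, so that the right adjoint to the inclusion $\mathcal{D}'\hookrightarrow\mathcal{D}$ carries this heart to a heart of a bounded t-structure on $\mathcal{D}'$, on which the restricted central charge defines a stability condition whose global dimension does not increase. If you want to salvage your work, your induction is close in spirit to the paper's proof of Corollary~\ref{cor:gldim1} (which likewise peels off exceptional objects, uses Lemma~\ref{lem:mon} together with \cite[Lemma 5.5]{Kikuta_2021} in the $gldim<1$ case, applies Theorem~\ref{thm:infdim1} and Theorem~\ref{thm:kikuta}(1) in the $gldim=1$ case, and terminates by finiteness of $HH_0(\mathcal{D})$ rather than by rank of $K_{num}$); but that is a different statement from the one at hand, and the connectedness issue you flag for ${}^\perp E$ would indeed need the careful treatment you only gesture at.
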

We can now easily deduce corollary~\ref{cor:gldim1}.
\begin{proof}[Proof of corollary~\ref{cor:gldim1}]
If $\mathcal{D}$ contained an exceptional object $E$, then we take the orthogonal subcategory $E^\perp$. By Lemma~\ref{lem:mon}, there exists a stability condition $\sigma' \in Stab(E^\perp)$ such that $gldim(\sigma') \leq gldim(\sigma)$. If $gldim(\sigma') < gldim(\sigma)$, then by \cite[Lemma 5.5]{Kikuta_2021}, any object $E' \in E^\perp$ is exceptional if and only if it is $\sigma'$-stable and in particular, there must exist an exceptional object. As $HH_0(\mathcal{D}) < \infty$ because $\mathcal{D}$ is a geometric noncommutative scheme, we proceed by induction and conclude that $\mathcal{D} = \langle E_1 , \ldots , E_n \rangle$ is generated by a full exceptional collection. 

If $gldim(\sigma') = gldim(\sigma) = 1$ and $E^\perp$ contains no exceptional objects, then we conclude by Theorem~\ref{thm:infdim1} together with Theorem~\ref{thm:kikuta}(1). If not, then we take an exceptional object in $E^\perp$ and its orthogonal and continue by induction. This procedure must terminate again by boundedness of the hochschild homology, and the claim follows. 
\end{proof}
\bibliographystyle{amsplain}
\bibliography{bondalserre}
\end{document}